\newtheorem{theorem}[equation]{Theorem}
\newtheorem{lemma}{Lemma}[section]
\newtheorem{corollary}[lemma]{Corollary}
\newtheorem{thm}[lemma]{Theorem}
\newtheorem{proposition}[lemma]{Proposition}
\newtheorem*{question*}{Question}
\newtheorem*{problem*}{Problem}
\theoremstyle{definition}
\theoremstyle{remark}
\newtheorem{remark}[lemma]{Remark}
\newtheorem{example}[lemma]{Example}
\newcommand\Sym{{\mathrm{Sym}}}
\newcommand\Q{{\mathbb Q}}
\newcommand\R{{\mathbb R}}
\newcommand\A{{\mathbb A}}
\newcommand\Z{{\mathbb Z}}
\newcommand\C{{\mathbb C}}
\newcommand\Spec{{\mathrm{Spec}}}
\newcommand\p{{\mathbb P}}
\newcommand\im{{\mathbf{i}}}
\newcommand\PGL{{\mathrm{PGL}}}
\newcommand\SL{{\mathrm{SL}}}
\newcommand\End{{\mathrm{End}}}
\newcommand\GL{{\mathrm{GL}}}
\newcommand\Aut{{\mathrm{Aut}}}
\newcommand\Diff{{\mathrm{Diff}}}
\newcommand\tr{\hbox to 1mm  {${}^t \!  $} }
\title{Extension of automorphisms of rational smooth affine curves}
\thanks{The authors gratefully acknowledge support by the Swiss National Science Foundation Grant  "Birational Geometry" PP00P2\_128422 /1 and by the French National Research Agency Grant "BirPol", ANR-11-JS01-004-01.}
\author{J\'er\'emy Blanc}
\address{J. Blanc, Universit\"{a}t Basel, Mathematisches Institut, Rheinsprung $21$, CH-$4051$ Basel, Switzerland.}
\email{jeremy.blanc@unibas.ch}
\urladdr{http://jones.math.unibas.ch/~blanc/}
\author{Jean-Philippe Furter}
\address{J.-P. Furter, Dpt. of Math., Univ. of La Rochelle, av. Cr\'epeau, 17000 La Rochelle, France}
\email{jpfurter@univ-lr.fr}
\urladdr{http://perso.univ-lr.fr/jpfurter/}
\author{Pierre-Marie Poloni}
\address{P.-M. Poloni, Universit\"{a}t Basel, Mathematisches Institut, Rheinsprung $21$, CH-$4051$ Basel, Switzerland.}
\email{pierre-marie.poloni@unibas.ch}
\urladdr{http://jones.math.unibas.ch/~poloni/}
\begin{document}
\maketitle
\begin{abstract}
We provide the existence, for every complex rational smooth affine curve~$\Gamma$, of  a linear action of~$\Aut(\Gamma)$ on the affine 3-dimensional space $\mathbb{A}^3$, together with a $\Aut(\Gamma)$-equivariant closed embedding of~$\Gamma$ into $\mathbb{A}^3$.
It is not possible to decrease the dimension of the target, the reason for this obstruction is also precisely described.
\end{abstract}

\centerline{\subjclass{14R20, 14H45}}

\section{Introduction}

Throughout this article, all varieties are algebraic varieties defined over the field $\C$ of complex numbers. The affine (resp. projective) $n$-space is denoted by $\A^n$ (resp.~$\p^n$).

It is well known that any smooth affine variety $X$ of dimension $n$ admits a closed embedding into $\A^m$, when $m \geq 2 n+1$ \cite[Theorem 1]{Srinivas}. If moreover $m\ge 2n+2$, then, by a result of Nori, Srinivas and Kaliman (see \cite{Srinivas} and \cite{Kaliman}), any two closed embeddings $\iota,\iota'\colon X\to \A^m$ are equivalent in the sense that there exists $f\in \Aut(\A^m)$ such that $\iota'=f\circ\iota$ .

In particular, if $\iota\colon X\to \A^m$ is a closed embedding of a smooth affine variety of dimension $n$ into some affine space of dimension $m\ge 2n+2$, then it follows that every automorphism $\varphi$ of~$X$ extends to an automorphism of the ambient space $\A^m$, since the two embeddings $\iota\circ\varphi$ and $\iota$ are equivalent.

However, Derksen, Kutzschebauch and Winkelmann showed in \cite{Derksen-Kutzschebauch-Winkelmann} that it is not always possible to extend the group structure of~$\Aut(X)$, i.e.\ to find a closed embedding $\iota \colon X\to \A^m$  and an action of~$\Aut(X)$ on $\A^m$ that restricts on $X$ to the action of~$\Aut(X)$ on it. More precisely, they proved  that there does not exist, for any integer~$m$, any injective group homomorphism from
$\Aut ( \C^* \times \C^* )\cong\GL _2 (\Z) \ltimes  (\C^*)^2$ to the group  $\Diff ( \R^m)$ of diffeomorphisms of~$\R^m$.

Recall that, if $G$ is an algebraic group  acting on an affine variety $X$, then  $X$ admits a $G$-equivariant closed embedding into a finite dimensional $G$-module (see \cite[Proposition 1.12, p. 56]{Borel}). In particular, there exist,
for every smooth affine curve~$\Gamma$, a linear action of~$\Aut(\Gamma)$ on an affine space $\A^m$ and a $\Aut(\Gamma)$-equivariant closed embedding of~$\Gamma$ into $\A^m$. A natural question is then to find the smallest possible $m$.

\smallskip

In this article, we settle the case of rational smooth affine curves. In this setting, the proof of Borel only gives the embedding dimension $m=2\cdot|\Aut(\Gamma)|$, when the automorphism group $\Aut(\Gamma)$ is finite. However, our main result shows that it is already possible to obtain $m=3$:

\begin{theorem}\label{Thm1}
Every rational smooth affine curve $\Gamma$ admits an $\Aut (\Gamma )$-equivariant closed embedding into  the affine space $\A^3$. Furthermore, there exist such embeddings for which the action of~$\Aut (\Gamma )$ on $\A^3$ is linear.
\end{theorem}

It is easy to construct closed embeddings into the affine plane $\A^2$ for all rational smooth affine curves $\Gamma$. But it is of course not possible in general to ask for $\Aut(\Gamma )$-equivariant embeddings into $\A^2$. Indeed, there exist rational smooth affine curves  whose automorphism groups are isomorphic to the alternating group ${\mathfrak A}_4$, to ${\mathfrak A}_5$, or to the symmetric group ${\mathfrak S}_4$ (see Section~\ref{Sec:Explicit}) and it is well known that the group ${\mathfrak A}_4$ has no faithful representation of dimension two. Since all finite subgroups of~$\Aut(\A^2)$ are linearizable, it follows that we cannot embed equivariantly such a curve into the plane, even if we allow non linear actions on $\A^2$.

In fact, we establish stronger impossibility statements showing that it would be also too optimistic in general to look for closed embeddings into $\A^2$ in such a way that every single automorphism of the curve extends to an automorphism of the ambient space (see Corollary~\ref{Coro:NoExt}).

\begin{theorem}\label{Thm2}
There exist  rational smooth affine curves $\Gamma$ with $\Aut(\Gamma)\not=1$ and such that for every closed embedding of~$\Gamma$ into $\A^2$, the identity on $\Gamma$ is its only automorphism that extends to an automorphism of~$\A^2$.
\end{theorem}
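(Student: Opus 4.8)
The plan is to exhibit an explicit family of rational smooth affine curves $\Gamma$ for which the automorphism group is nontrivial but ``too big'' (or too constrained) to be realized inside $\Aut(\A^2)$ for any planar embedding. A natural candidate is a curve $\Gamma = \p^1 \setminus S$ where $S$ is a finite set of points chosen so that $\Aut(\Gamma)$ — which is the subgroup of $\PGL_2(\C)$ stabilizing $S$ — contains, say, an element of finite order $n \geq 2$, or better a reflection-type involution, while every embedding of $\Gamma$ into $\A^2$ forces that element to act trivially. First I would recall that for $\Gamma = \p^1 \setminus S$ with $\#S = s \geq 3$, one has $\Aut(\Gamma) = \{ g \in \PGL_2(\C) : g(S) = S\}$, a finite group, and I would pick $S$ (for instance four points in a suitable cross-ratio, or the vertices of a configuration with cyclic symmetry) so that this group is a nontrivial cyclic or dihedral group.

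The heart of the argument is an analysis of closed embeddings $\iota \colon \Gamma \to \A^2$ and of which elements of $\Aut(\Gamma)$ can be induced by elements of $\Aut(\A^2)$. The key tool is that $\Aut(\A^2)$ is the amalgamated product of the affine and the triangular (de Jonqui\`eres) subgroups (the Jung--van der Kulk theorem), so every $f \in \Aut(\A^2)$ has a well-controlled action on the curves at infinity. For a fixed embedding $\iota$, the closure $\overline{\iota(\Gamma)}$ in $\p^2$ is a rational curve meeting the line at infinity in some configuration of points, and an automorphism $f$ of $\A^2$ preserving $\iota(\Gamma)$ extends to a birational map of $\p^2$ that must permute these points at infinity in a way compatible with the action of $\PGL_2(\C)$ on the normalization $\p^1$. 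I would show that this compatibility, together with the constraint that $f$ be a polynomial automorphism (so its extension to $\p^2$ has a very restricted behavior at the line at infinity), forces the induced automorphism of $\Gamma$ to fix the ``point at infinity'' data of the embedding, and then to lie in a subgroup that, for the chosen $S$, is trivial. Concretely: among the $s$ points of $S \subset \p^1$, the embedding $\iota$ distinguishes those that map to points of $\overline{\iota(\Gamma)} \cap L_\infty$; since a polynomial automorphism of $\A^2$ preserves (set-theoretically, up to the structure of the amalgam) the intersection with $L_\infty$, an extendable automorphism of $\Gamma$ must stabilize this distinguished subset of $S$ with its multiplicities — and by choosing the cross-ratios of $S$ generically subject to having the desired symmetry, I can arrange that no nontrivial element of $\Aut(\Gamma)$ does so, for any embedding.

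The main obstacle, and the step requiring the most care, is the uniformity over \emph{all} closed embeddings $\iota \colon \Gamma \to \A^2$: there are infinitely many inequivalent such embeddings (unlike the high-dimensional case quoted from Nori--Srinivas--Kaliman), so I cannot fix one and compute. I expect to handle this by a case analysis on the structure of the divisor at infinity $\overline{\iota(\Gamma)} \cap L_\infty$ of the plane model, combined with the amalgamated-product description of $\Aut(\A^2)$ to reduce any extendable automorphism to one that is either affine or triangular after conjugating the embedding — and in each normal form, to trace through exactly which permutations of $S$ it can realize. Once this is in place, the choice of $\Gamma$ (a concrete $\p^1$ minus four or five well-chosen points) finishes the proof, and Corollary~\ref{Coro:NoExt}, together with Theorem~\ref{Thm1}, shows that dimension $3$ is optimal.
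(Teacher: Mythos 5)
There is a genuine gap: your proposed obstruction at infinity is vacuous as stated, and your choice of curve would not work. For a \emph{closed} embedding $\iota\colon\Gamma\to\A^2$ with $\Gamma=\p^1\setminus S$, \emph{every} point of $S$ lies over $\overline{\iota(\Gamma)}\cap L_\infty$ under the normalization map, so the ``distinguished subset'' of $S$ is all of $S$, which every element of $\Aut(\Gamma)$ stabilizes by definition; moreover an automorphism of $\A^2$ preserving $\iota(\Gamma)$ automatically preserves the multiplicity data of the branches at infinity, so no contradiction can be extracted from equivariance of that data alone, however generically the cross-ratios are chosen. The curve must also be chosen much more carefully than ``four or five well-chosen points'': the paper shows (Theorem~\ref{Thm:PtsInfinity}(1) and Proposition~\ref{Prop:ExtOrder2}) that any automorphism fixing at most one point of $\Gamma$, and any involution, \emph{does} extend to $\A^2$ for a suitable embedding. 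Hence a four-point configuration is hopeless (its generic stabilizer is the Klein four-group, all of whose nontrivial elements are involutions), a generic five-point configuration has trivial automorphism group, and even $\Gamma=\A^1\setminus\{0,1\}$ with $\Aut(\Gamma)\cong\Sym_3$ fails because its transpositions extend. One is forced to take $\Aut(\Gamma)$ of odd order with every nontrivial element fixing two points of $\Gamma$; the paper takes $3k$ points ($k\ge 3$, with algebraically independent parameters to kill extra symmetries) invariant under $[x:y]\mapsto[x:\omega y]$, $\omega^3=1$, so that $\Aut(\Gamma)\cong\Z/3\Z$.

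The missing idea is the actual non-extendability mechanism for such an element $g$ of odd order $n>1$ fixing two points of $\Gamma$. The paper's argument is numerical, not configurational: if $g$ extended to $h\in\Aut(\A^2)$, then $h^n$ fixes $\tau(\Gamma)$ pointwise and is hence trivial (Friedland--Milnor via the amalgam), so $h$ is linearizable (Kambayashi); after resolving the singularities of the projective closure $C\subset\p^2$ of degree $d$, one shows by induction down the resolution tower that the sum of multiplicities over each orbit at infinity is divisible by $n$, whence the genus formula forces $n\mid\frac{(d-1)(d-2)}{2}$ and the intersection with $L_\infty$ forces $n\mid d$ --- impossible for odd $n>2$ since $\gcd\bigl(d,\frac{(d-1)(d-2)}{2}\bigr)\le 2$. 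Your plan invokes the amalgamated product and the divisor at infinity, which is the right toolbox, but without this divisibility argument (or a substitute of comparable strength) the reduction to affine or triangular normal forms does not by itself rule out the extension.
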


Let us also emphasize that Theorem~\ref{Thm1} cannot be generalized to all smooth affine curves.
Actually, there even exist, for every natural number $n$, smooth affine curves~$\Gamma$ which do not admit any $\Aut(\Gamma)$-equivariant closed embedding into $\A^n$.

To see this, recall that every finite group $G$ is equal to the automorphism group of a smooth projective curve, and thus of an affine one \cite{Greenberg}, and take a smooth affine curve $\Gamma_n$ whose automorphism group is isomorphic to $\left(\Z/2\Z\right)^{n+1}$. Then, $\Gamma_n$ does not admit any $\Aut(\Gamma_n)$-equivariant embedding into $\A^n$, because $\left(\Z/2\Z\right)^{n+1}$ does not act faithfully on $\A^n$. Indeed, by Smith theory, the action of a finite $p$-group on $\A^n$ has always a fixed point (see e.g. \cite[Th. 7.11, p 145]{Bredon}, \cite[p. 204]{Petrie-Randall}, or \cite[Proposition~1]{Derksen-Kutzschebauch-Winkelmann}) and the induced  tangential (linear) representation at that fixed point should be faithful too (see e.g. \cite[Lemma 4]{Derksen-Kutzschebauch-Winkelmann}).

It would however be interesting to know what happens in the case of smooth affine curves of genus $1$. Sathaye proved in \cite{Sathaye} that such curves admit closed embeddings into~$\A^2$. Nevertheless, we do not know what is the minimal $m$ (if it exists) such that every smooth affine curve $\Gamma$ of genus $1$ admits an $\Aut(\Gamma)$-equivariant closed embedding into~$\A^m$.

\medskip

The article is organized as follows.

Section~\ref{Sec:planeEmb} concerns embeddings of rational smooth affine  curves into the affine plane. We give examples of automorphisms of such curves that do not extend, and prove Theorem~\ref{Thm2}  (see Corollary~\ref{Coro:NoExt}).

Section~\ref{SecPlanarEmbedd} is devoted to the study of embeddings of smooth rational curves into $\A^3$ whose images are contained in a hyperplane. We prove that they are all equivalent and thus that any two closed embeddings of a rational smooth affine curve into $\A^2$ become equivalent, when seen as embeddings in $\A^3$ (Proposition~\ref{Prop:planarEmb}). This answers a question of Bhatwadekar and Srinivas in this case.

In section~\ref{SecSL2} we realize every non-empty subset of~$\p^1$ that is invariant by a subgroup~$H$ of~$\Aut(\p^1)$ as the fixed-point set of  a $H$-equivariant endomorphism of~$\p^1$  (Corollary~\ref{Coro:ExistHequi}). This result is used in Section~\ref{Sec:EmbQuad} to prove Theorem~\ref{Thm1}  (see Theorem~\ref{thm:GroupXC3}). Explicit formulas are given in Section~\ref{Sec:Explicit}.

\section{Embeddings of rational smooth affine curves into the plane}\label{Sec:planeEmb}

Let us recall that every rational smooth affine curve $\Gamma$ is isomorphic to $\p^1\setminus \Lambda$, where $\Lambda$ is a finite set of~$r\ge 1$ points.

In particular, it admits a closed embedding into $\A^2$. Indeed, $\Gamma$ can also be seen as the complement in $\A^1$ of a finite number (possibly zero) of points and we can consider the closed embedding $\tau\colon \Gamma\to\A^2$ given by $x\mapsto (x,\frac{1}{P(x)})$, where $P\in\C[x]$ is a polynomial whose roots  are exactly the removed points. Note that the image of~$\tau$ is the curve of~$\A^2$ defined by the equation $P(x)y=1$.

Moreover, the automorphism group $\Aut(\Gamma)$ of the curve $\Gamma=\p^1\setminus \Lambda$ is equal to the group of automorphisms of~$\p^1$ that preserve the set $\Lambda$. This gives a group homomorphism from $\Aut(\Gamma)$ to the symmetric group $\Sym_r$. Note that this homomorphism is injective if and only if $r\ge 3$.

If $r$ is equal  to $1$ or $2$, then $\Gamma$ is isomorphic to $\A^1$ or $\A^1\setminus\{0\}$, and its automorphism group  is  $\C^* \ltimes\C$ or $\{ \pm 1 \} \ltimes\C^*$ respectively. If $r\ge 3$, then $\Aut(\Gamma)$ is a finite group.

The Abhyankar-Moh-Suzuki theorem claims that all closed embeddings of~$\A^1$ into~$\A^2$ are equivalent to the one given by $t\mapsto (t,0)$. This implies that every automorphism of an affine line embedded into $\A^2$ extends to an automorphism of the ambiant space. If $r \geq 2$ we can on the contrary construct embeddings of the curve~$\Gamma$ which do not have this property. Actually, we can choose embeddings such that, except the identity, no automorphisms of~$\Gamma$ extend.

\begin{lemma}
Let $\Gamma=\A^1\setminus \Delta$, where $\Delta$ is a non-empty finite set. Then, there exist infinitely many non-equivalent closed embeddings  $\iota\colon\Gamma\to\A^2$ such that the identity is the only automorphism of~$\A^2$ that preserves $\iota(\Gamma)$.
\end{lemma}

\begin{proof}
We can assume that $\Delta=\{0,a_1,\dots,a_m\}$, where $a_1,\dots,a_m\in \C\setminus\{0,1\}$, $m\ge 0$.
For every $k\geq2$, we denote by  $\iota_k\colon \Gamma\to \A^2$ the embedding given by
\[{x\mapsto \left(x,\frac{x-1}{x^k\prod_{i=1}^m (x-a_i)}\right).}\]
It  induces an isomorphism between $\Gamma$ and  the curve $\iota_k(\Gamma)$ defined by the equation  $$x=y x^k\prod_{i=1}^m (x-a_i)+1.$$

We first remark that any automorphism of~$\A^2$ that sends  $\iota_k(\Gamma)$ onto a curve of degree at most $\deg(\iota(\Gamma))=k+m+1$ is necessarily affine. Indeed, if  $f\colon (x,y)\mapsto (f_1(x,y),f_2(x,y))$ is the inverse of such an automorphism, we get:
$$\begin{array}{rcl}
\deg(f_1-f_2(f_1)^k \prod_{i=1}^m (f_1-a_i)-1)=(k+m) \deg f_1 + \deg f_2&\le&  k+m+1.\end{array}$$
This implies that $\deg(f_1)=\deg(f_2)=1$, i.e. that $f$ (and its inverse too) is affine.
In particular, all above embeddings are non-equivalent. We now show that the identity is the only affine automorphism of~$\A^2$ that preserves the curve $\iota_k(\Gamma)$.

Any such automorphism extends to an automorphism $\tau$ of~$\p^2$ preserving the line at infinity given by $z=0$ and the curve of equation
$$xz^{k+m}-yx^k \prod\limits_{i=1}^m (x-a_iz)-z^{k+m+1} =0.$$
On the line at infinity we get the two points  $[0:1:0]$ and $[1:0:0]$. The point $[1:0:0]$ is smooth with tangent $y=0$ and the point $[0:1:0]$ is singular with  tangent cone given by $x^k\prod_{i=1}^m(x-a_iz)=0$. Hence, both lines $x=0$ and $y=0$ are invariant. Therefore, $\tau$ is given by a diagonal automorphism of the form $[x:y:z]\mapsto [  \mu x: \nu y:z]$, $\mu,\nu\in \C^*$. Replacing in the equation yields $\mu=\nu=1$.
\end{proof}

The curves  $\A^1$ and $\A^1\setminus\{0\}$  admit  closed embeddings into $\A^2$ such that all their automorphisms  extend to  automorphisms  of~$\A^2$. Consider for example the curves of equations $y=0$ and  $xy=1$. However, it is no longer true for the curve $\A^1\setminus\{0,1\}$.

\begin{proposition}  \label{Prop:A101}
Let $\Gamma=\A^1 \setminus\{0,1\}$. For every closed embedding $\tau \colon \Gamma \to \A^2$, there exists an automorphism of~$\Gamma$ that does not extend to $\A^2$.
\end{proposition}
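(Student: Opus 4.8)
The plan is to use the known classification of embeddings of $\Gamma = \A^1\setminus\{0,1\}$ into $\A^2$ together with a rigidity argument coming from the structure of $\Aut(\A^2)$. The curve $\Gamma$ is a once-punctured affine line, hence it has Euler characteristic $-1$ and logarithmic Kodaira dimension $1$ (it is neither $\A^1$ nor $\A^1\setminus\{0\}$); by the theorem of Abhyankar--Moh--Suzuki and its extensions (Lin--Zaidenberg type results for rational curves with one place at infinity), any closed embedding $\tau\colon\Gamma\to\A^2$ is, up to composing with an automorphism of $\A^2$, of a very restricted shape — in fact all such embeddings are, up to $\Aut(\A^2)$, the standard one $x\mapsto(x,\frac1{x})$ of $\A^1\setminus\{0\}$'s sibling, i.e.\ one shows the image curve $C=\tau(\Gamma)$ has a unique point at infinity in $\p^2$ after a suitable change of coordinates and is rectifiable. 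So first I would reduce, replacing $\tau$ by $f\circ\tau$ for a suitable $f\in\Aut(\A^2)$, to a normal-form embedding whose image $C$ is an explicit plane curve (for instance $x(xy-1)=0$-type, or more precisely a curve isomorphic to $\A^1\setminus\{0,1\}$ with two ``missing'' points visible as the intersection pattern with a line at infinity plus a finite tangency).

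The second step is to compute $\Aut(\Gamma)$ concretely and to identify which of its elements could possibly extend. Since $\Gamma=\p^1\setminus\{0,1,\infty\}$, we have $\Aut(\Gamma)\cong\Sym_3$, generated by the involutions $x\mapsto 1-x$ and $x\mapsto 1/x$ (acting on $\p^1$ and permuting $\{0,1,\infty\}$). An automorphism $\varphi$ of $\Gamma$ extends to $F\in\Aut(\A^2)$ with respect to the embedding $\tau$ exactly when $F$ preserves $C=\tau(\Gamma)$ and induces $\varphi$ on it. Now the key point: an automorphism of $\A^2$ preserving $C$ must preserve the pair consisting of $C$ and the line(s) at infinity / the behaviour of $C$ near infinity in any completion; using the normal form, the ``ends'' of $\Gamma$ (the three punctures $0,1,\infty$) are not all on the same footing inside $\A^2$ — one puncture corresponds to the place of $C$ at infinity in $\A^2$ (the end that ``escapes'' the plane), while the other two are the two points of $\A^2$ that are limits of $C$ within the plane but not on $C$ (the ``holes'' where $C$ fails to be closed — wait, $C$ is closed, so rather: the punctures that extend to points of $\A^2$ lying on the closure issues). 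More carefully: among $\{0,1,\infty\}$, exactly one behaves differently from the other two as seen from the embedding, so the homomorphism $\Aut(\Gamma)\to\Sym_3$ composed with ``extendable automorphisms'' lands in the stabiliser of a point, i.e.\ in a subgroup of order $\le 2$; and then one more argument rules out even that order-$2$ part, or at least shows that for a suitable choice we get only the identity — but we only need to exhibit \emph{one} non-extending automorphism, so it suffices to show the image in $\Sym_3$ is a proper subgroup, hence misses some $\varphi\neq\id$.

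Concretely I would argue as follows. Compactify $\A^2\subset\p^2$ (or a suitable Hirzebruch-surface completion adapted to $C$) and let $\bar C$ be the closure of $C$. Then $\bar C\cong\p^1$ and $\bar C\setminus C$ is a union of points at infinity; since $C\cong\p^1\setminus\{0,1,\infty\}$, the three punctures distribute between ``points at infinity of $\p^2$'' and ``points of $\A^2$ not lying on $C$''. But $C$ is closed in $\A^2$, so no puncture can be a point of $\A^2$; hence all three punctures lie over the line at infinity. The number of such points (with multiplicity/branches) is a discrete invariant preserved by any $F\in\Aut(\A^2)$ (which extends to a birational self-map of $\p^2$ regularising suitably, permuting the places of $\bar C$ over $\ell_\infty$ in a way compatible with the action on the Picard group / on the local intersection data). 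One then computes, in the normal form, the local intersection multiplicities of $\bar C$ with $\ell_\infty$ at its points at infinity and checks they are \emph{not} all equal: the three punctures carry distinct numerical data, so any extendable $\varphi$ must fix each puncture individually, forcing (via the injectivity of $\Aut(\Gamma)\to\Sym_3$) that $\varphi=\id$. In particular the non-trivial elements of $\Sym_3\cong\Aut(\Gamma)$ do not extend, which proves the proposition.

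The main obstacle I expect is making the ``discrete invariant distinguishing the three punctures'' argument rigorous and independent of the choice of completion and of $F\in\Aut(\A^2)$: one must know that any automorphism of $\A^2$, though it need not extend to an automorphism of $\p^2$, still acts in a controlled way on the set of places at infinity of a fixed affine curve (this is where one invokes the structure theory of $\Aut(\A^2)$ as amalgamated product, or the fact that $\Aut(\A^2)$ acts on a space of ``normal'' completions / the associated tree, preserving intersection multiplicities at infinity up to the induced permutation). Granting that, the computation in the normal form is routine. Alternatively, if the AMS-type normal form already exhibits $C$ explicitly enough, one can short-circuit this by directly classifying all $F\in\Aut(\A^2)$ with $F(C)=C$ and reading off the induced subgroup of $\Aut(\Gamma)$ — this is the cleanest route and is presumably what the authors do.
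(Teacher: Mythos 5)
Your argument has a genuine gap at its very first step, and everything after depends on it. You propose to reduce an arbitrary closed embedding $\tau\colon\Gamma\to\A^2$ to a normal form using Abhyankar--Moh--Suzuki ``and its extensions''. But $\Gamma=\A^1\setminus\{0,1\}$ has \emph{three} places at infinity, so AMS and the Lin--Zaidenberg-type results (which concern curves with one place at infinity) simply do not apply, and no classification of the plane embeddings of this curve up to equivalence is available --- indeed, already for $\A^1\setminus\{0\}$ there exist inequivalent closed embeddings into $\A^2$, so one should not expect uniqueness here. Without the normal form, your subsequent computation of ``distinct numerical data at the three punctures'' cannot even be set up, and in any case that claim would have to be verified for \emph{every} embedding, not just one representative; nothing you write rules out an embedding in which two of the three places at infinity have identical local intersection data with the boundary. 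You candidly flag the difficulty of making the ``discrete invariant at infinity'' argument independent of the completion and of $F\in\Aut(\A^2)$, but that difficulty is not resolved, and the whole proof rests on it.

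For contrast, the paper avoids any analysis of the embedding. It argues by contradiction: if every automorphism extended, then (using the lemma that a nontrivial automorphism of $\A^2$ cannot fix pointwise a closed curve isomorphic to $\A^1\setminus\{0,1\}$) one would obtain a subgroup $G\subset\Aut(\A^2)$ isomorphic to $\Sym_3$ acting on $\tau(\Gamma)$ as $\Aut(\Gamma)$. By the amalgamated product structure (Kambayashi), $G$ is conjugate to an explicit linear $\Sym_3$ in $\GL(2,\C)$, and the contradiction is purely group-theoretic: $\Aut(\Gamma)$ has an orbit of size $2$ on $\Gamma$ (the roots of $\omega^2-\omega+1=0$), whereas the linear $\Sym_3$ has no orbit of size $2$ in $\A^2$. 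If you want to salvage your line of thought (extendable automorphisms must permute the places at infinity compatibly with numerical data), the paper's Theorem~\ref{Thm:PtsInfinity} carries out a rigorous version of it via repeated blow-ups and the genus formula, but that is a separate and substantially more delicate argument than what you have sketched.
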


Before proving this statement, let us recall the following classical result (see e.g.~\cite[Theorem 2]{Furushima}).

\begin{lemma}\label{Lemm:FiniteGL2}
Every finite subgroup of~$\Aut(\A^2)$ is conjugate to a subgroup of~$\GL(2,\C)$.
\end{lemma}
\begin{proof}[Proof of Proposition~$\ref{Prop:A101}$]
Note that the group of automorphisms of~$\Gamma$ is the group $\Sym_3$ of permutations of a set of three elements, corresponding to the three points ``at infinity'', i.e.\ the points of~$\p^1\setminus \iota(\Gamma)$, where $\iota$ is any (open) embedding of~$\Gamma$ in $\p^1$. It is generated by the automorphisms $\rho\colon x\mapsto 1/(1-x)$ and $\sigma\colon x\mapsto 1-x$ and we have
\[\Aut(\Gamma)= \langle \sigma,\rho\ |\ \sigma^2=\rho^3=1, \sigma\rho\sigma^{-1}=\rho^{-1} \rangle=\Sym_3.\]
Suppose for contradiction that there exists a closed embedding $\tau\colon \Gamma\to \A^2$ for which every automorphism of~$\Gamma$ extends. Since the identity is the only automorphism of~$\A^2$ that restricts to the identity on a closed curve isomorphic to $\A^1 \setminus\{0,1\}$ (see Lemma~\ref{Lemma:fixed points of a plane polynomial automorphism} below), we would have a subgroup $G\subset \Aut(\A^2)$ isomorphic to $\Sym_3$ whose restriction to $\tau(\Gamma)$ yields $\Aut(\Gamma)$.

We now prove that this is impossible. First, recall that $G$ is conjugate to a subgroup of~$\GL(2,\C)$ (see Lemma~\ref{Lemm:FiniteGL2} above). Then, one easily checks that $G$ is conjugate to the subgroup $G'$ of~$\GL(2,\C)$ generated by
$$\hat{\rho}\colon (x,y)\mapsto (y,-x-y)\mbox{ and }\hat{\sigma}\colon (x,y)\mapsto (y,x).$$

We let $f\in\Aut(\A^2)$ be an automorphism such that $fGf^{-1}=G'$ and we consider the embedding $\hat{\tau}=f\circ\tau$ of~$\Gamma$ in $\A^2$. The automorphism group of~$\Gamma$ extends then to~$G'$ for this embedding.

Remark that the set $\{\omega\mid\omega^2 -  \omega+1=0\}\subset \Gamma$, which is the set of fixed points of~$\rho$, is an orbit of size $2$ of~$\Aut(\Gamma)$. But one checks that $G'\subset \GL(2,\C)$ does not have any orbit of size $2$ in the affine plane $\A^2$. This gives a contradiction.
\end{proof}

\begin{lemma} \label{Lemma:fixed points of a plane polynomial automorphism}
The set of fixed points of a plane polynomial automorphism is either a finite set of points $($possibly empty$)$,  a finite disjoint union of subvarieties isomorphic to $\A^1$, or the whole plane.
\end{lemma}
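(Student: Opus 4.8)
\medskip
\noindent\emph{Proof strategy.} Write $f=(f_1,f_2)$ with $f_1,f_2\in\C[x,y]$. The plan is to observe first that the fixed-point set $\mathrm{Fix}(f)=\{p\mid f(p)=p\}$ equals the Zariski-closed subset $V(f_1-x,f_2-y)$ of $\A^2$, so that $\dim\mathrm{Fix}(f)\in\{0,1,2\}$; if $\dim\mathrm{Fix}(f)=2$ then $\mathrm{Fix}(f)=\A^2$ because $\A^2$ is irreducible (and then $f=\id$), while if $\dim\mathrm{Fix}(f)=0$ then $\mathrm{Fix}(f)$ is a finite set of points. The whole content is therefore the case $\dim\mathrm{Fix}(f)=1$, which I would treat by reducing $f$ to a normal form.

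Since $\mathrm{Fix}(gfg^{-1})=g(\mathrm{Fix}(f))$ for every $g\in\Aut(\A^2)$, replacing $f$ by a conjugate changes neither the dimension nor the isomorphism type of $\mathrm{Fix}(f)$; so by the amalgamated product structure of $\Aut(\A^2)$ (\cite{Kam79}) I may assume that $f$ is either affine, or a de Jonquières automorphism $(x,y)\mapsto(ax+p(y),by+c)$ with $a,b\in\C^*$ and $p\in\C[y]$, or a cyclically reduced word of length $\ge 2$, i.e.\ (by Friedland--Milnor) a composition of generalised Hénon maps. I expect this last case to be the main obstacle: I would rule it out using the classical fact that a composition of generalised Hénon maps has strictly growing iterated degrees and admits no invariant algebraic curve -- equivalently, only finitely many fixed points, all isolated. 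Since a one-dimensional component of $\mathrm{Fix}(f)$ would be a pointwise-fixed, in particular invariant, curve, this case cannot occur when $\dim\mathrm{Fix}(f)=1$.

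It then remains to compute $\mathrm{Fix}(f)$ in the two elementary normal forms. If $f(v)=Mv+w$ is affine, then $\mathrm{Fix}(f)$ is the solution set of $(M-I)v=-w$, which when one-dimensional is a coset of the line $\ker(M-I)$, hence $\cong\A^1$. If $f(x,y)=(ax+p(y),by+c)$, a direct discussion shows that $\dim\mathrm{Fix}(f)=1$ occurs exactly in the cases: (i) $b=1$, $c=0$, $a\ne1$, where $\mathrm{Fix}(f)=\{(p(y)/(1-a),y)\mid y\in\C\}$ is the graph of a polynomial, hence $\cong\A^1$; (ii) $b=1$, $c=0$, $a=1$ with $p$ non-constant, where $\mathrm{Fix}(f)=\bigsqcup_i\{y=\alpha_i\}$ for $\alpha_1,\dots,\alpha_k$ the distinct roots of $p$, a finite disjoint union of lines each $\cong\A^1$; and (iii) $b\ne1$, $a=1$, $p(-c/(b-1))=0$, where $\mathrm{Fix}(f)=\{y=-c/(b-1)\}\cong\A^1$. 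Combined with the dimension-$0$ and dimension-$2$ observations this would yield the statement. Apart from the input on Hénon maps, everything here is routine linear algebra and polynomial bookkeeping once the amalgamated-product normal form is in hand.
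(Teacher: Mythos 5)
Your proposal is correct and follows essentially the same route as the paper: reduce by the amalgamated product structure of $\Aut(\A^2)$ to the triangular/elementary normal form (handled by direct computation, which your case analysis carries out correctly) or to a cyclically reduced element, where the Friedland--Milnor result that such maps have a non-empty finite set of fixed points finishes the argument. The only cosmetic difference is that you list the affine case separately, though it is subsumed in the triangular one.
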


\begin{proof}
Using the amalgamated structure of~$\Aut(\A^2)$, it is observed in \cite{Friedland-Milnor} that a plane polynomial automorphism  is conjugate  either to a triangular automorphism $(x,y) \mapsto (ax + p(y), by+c)$ with $a,b,c \in \C$ and $p(y) \in \C[y]$, or  to some cyclically reduced element  (see \cite[I.1.3]{Serre}  or \cite[p. 70]{Friedland-Milnor} for the definition of a cyclically reduced element).
In the first case, an obvious computation shows that the set of fixed points is either empty,  a point, a finite disjoint union of subvarieties isomorphic to $\A^1$, or the whole plane.
In the second case, by \cite[Theorem 3.1]{Friedland-Milnor}, the set of fixed points is a non-empty finite set of points.
\end{proof}

Using tools of birational geometry, we can actually specify the statement of  Proposition~\ref{Prop:A101}. Indeed, Theorem~\ref{Thm:PtsInfinity} below shows that there is no closed embedding of the curve $\A^1 \setminus\{0,1\}$ into $\A^2$ such that the automorphism $\rho\colon x\mapsto 1/(1-x)$  extends to an automorphism of the affine plane.

Before we state this result, let us recall that any automorphism $f$ of~$\p^1$ of finite order $n>1$ is  conjugate to $[x:y]\mapsto [x:\xi y]$, where $\xi$ is a primitive $n$-th root of unity. In particular, it has the following properties:
\begin{enumerate}
\item
the automorphism $f$ fixes exactly two points of~$\p^1$;
\item
all other orbits under the action of~$f$ have size $n$.
\end{enumerate}
Thus, the following holds for every automorphism $g\in \Aut(\Gamma)$ of order $n>1$ of a rational smooth curve $\Gamma$.
\begin{enumerate}
\item
The automorphism $g$ fixes $0$, $1$ or $2$ points of~$\Gamma$;
\item
all other orbits under the action of~$g$ have size $n$.
\end{enumerate}

\begin{thm}\label{Thm:PtsInfinity}
Let $\Gamma$ be a rational smooth  affine curve  and let $g\in \Aut(\Gamma)$ be an automorphism.

\begin{enumerate}
\item
If $g$ fixes at most one point of~$\Gamma$, there is a closed embedding $\tau\colon \Gamma\to \A^2$ such that $g$ extends to an automorphism of~$\A^2$.
\item
If $g$ is of finite order $n>1$ with $n$ odd and if it fixes exactly two points of~$\Gamma$, then there is no closed embedding $\tau\colon \Gamma\to \A^2$ such that $g$ extends to an automorphism of~$\A^2$.
\end{enumerate}
\end{thm}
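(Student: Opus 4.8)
The plan is to handle the two cases separately, using birational geometry to control how automorphisms of $\A^2$ (or of a compactification) act on the curve at infinity.

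\medskip

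\textbf{Proof of (1).} Since $g$ fixes at most one point of $\Gamma$, I would first arrange a convenient model. Write $\Gamma = \p^1 \setminus \Lambda$ with $g$ induced by an automorphism of $\p^1$ preserving $\Lambda$. If $g$ has infinite order, then $\Gamma \cong \A^1$ or $\A^1 \setminus \{0\}$ and the statement is the easy case already noted in the excerpt (take $y=0$ or $xy=1$). So assume $g$ has finite order $n$. If $n = 1$ the curve $\tau\colon x \mapsto (x, 1/P(x))$ works. If $n > 1$, then $g$ fixes exactly two points of $\p^1$; since it fixes at most one point of $\Gamma$, at least one fixed point lies in $\Lambda$. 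Conjugating in $\Aut(\p^1)$, I may assume that fixed point is $\infty$ and that $g\colon x \mapsto \xi x$ for a primitive $n$-th root of unity $\xi$. Then $\Lambda \setminus \{\infty\} \subset \A^1$ is a $g$-invariant finite set, so it is a union of orbits $\{\zeta, \xi\zeta, \dots, \xi^{n-1}\zeta\}$ together with possibly $\{0\}$. I would therefore take $P(x) = x^{\epsilon}\prod_j (x^n - c_j)$ (with $\epsilon \in \{0,1\}$ recording whether $0 \in \Lambda$), so that $P(\xi x) = \xi^{n\epsilon} P(x) = P(x)$ when $n \mid n\epsilon$, i.e. always. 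Hence the embedding $\tau\colon x \mapsto (x, 1/P(x))$ satisfies $\tau(\xi x) = (\xi x, 1/P(x))$, which is the image of $\tau$ under the linear automorphism $(x,y) \mapsto (\xi x, y)$ of $\A^2$. This extends $g$, proving (1). (One should double-check the remaining subcase where the \emph{other} fixed point of $g$ also needs attention, but since $g$ fixes at most one point of $\Gamma$, moving one fixed point to $\infty$ suffices; the second fixed point, namely $0$, may or may not lie in $\Lambda$ and is handled by the factor $x^\epsilon$.)

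\medskip

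\textbf{Proof of (2).} This is the substantive part. Suppose $n > 1$ is odd, $g$ fixes exactly two points $p_1, p_2$ of $\Gamma$, and for contradiction that there is a closed embedding $\tau\colon \Gamma \to \A^2$ and $\hat g \in \Aut(\A^2)$ extending $g$. Since $\hat g$ restricts to an automorphism of finite order $n$ on the curve $\tau(\Gamma)$, and $\tau(\Gamma)$ is not contained in the fixed locus of any non-identity plane automorphism (the fixed locus is as in Lemma~\ref{Lemma:fixed points of a plane polynomial automorphism}, while $\tau(\Gamma)$ is rational with at least one puncture but is not of those forms once we know $g$ acts non-trivially), the element $\hat g$ has finite order $n$ in $\Aut(\A^2)$. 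By the theorem of linearization of finite subgroups of $\Aut(\A^2)$ (via the amalgamated product structure, cf. the argument in Proposition~\ref{Prop:A101}), $\hat g$ is conjugate to a linear map, so after composing $\tau$ with an element of $\Aut(\A^2)$ I may assume $\hat g \in \GL(2,\C)$ is diagonalizable of order $n$, say $\hat g\colon (x,y) \mapsto (\alpha x, \beta y)$ with $\alpha, \beta$ roots of unity and $\mathrm{lcm}(\mathrm{ord}(\alpha), \mathrm{ord}(\beta)) = n$. The key geometric input is then to compactify: let $X$ be a smooth projective rational surface obtained from $\p^2 \supset \A^2$ by resolving the action of $\hat g$ so that $\hat g$ lifts to an automorphism of $X$, and let $C \subset X$ be the closure of $\tau(\Gamma)$. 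Then $g$ acts on $C$, which is a smooth rational curve (normalize if needed) containing $\Gamma = \p^1 \setminus \Lambda$, and $g$ has exactly two fixed points on the \emph{interior} $\Gamma$ and acts with some number of fixed points at infinity.

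\medskip

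The crux is a parity/fixed-point count. On $C \cong \p^1$ the automorphism $g$ of order $n$ has exactly two fixed points in total. Two of these are $p_1, p_2 \in \Gamma$, so $g$ fixes \emph{no} point of $\Lambda = C \setminus \Gamma$ and $\Lambda$ is a union of free $g$-orbits, so $|\Lambda| \equiv 0 \pmod n$, in particular $|\Lambda|$ is even (as $n$... wait, $n$ is odd, so this only gives $n \mid |\Lambda|$). The real contradiction comes from the ambient picture: the diagonal action of $\hat g$ on $\A^2$ fixes the origin and (if $\alpha \ne 1 \ne \beta$) no other point of $\A^2$, but fixes the two coordinate axes setwise with a single fixed point on each at infinity in $\p^2$. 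I would analyze where the two interior fixed points $p_1, p_2$ of $g|_{\tau(\Gamma)}$ can sit: they must be among the fixed points of $\hat g$ on $\A^2$, hence $p_1 = p_2 = (0,0)$ — impossible since $p_1 \ne p_2$ — unless one of $\alpha, \beta$ equals $1$, say $\beta = 1$, in which case $\hat g$ fixes the whole line $\{x = 0\}$ pointwise and $\tau(\Gamma)$ would meet that line in at most one point (else $\tau(\Gamma) \subset \{x=0\}$, absurd), again giving at most one interior fixed point. This contradicts $g$ having two fixed points on $\Gamma$.

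\medskip

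\textbf{Main obstacle.} The delicate point is that a priori $\hat g$ need only be conjugate to a \emph{linear} map of finite order, and after conjugation the curve $\tau(\Gamma)$ changes; one must make sure the fixed-point count of $g$ on the interior $\Gamma$ is preserved (it is, being intrinsic) and correctly relate the two interior fixed points to the fixed locus of the linear model, which for an order-$n$ element with $n$ odd is either a single point (the origin) or a single line. The hypothesis $n$ odd is used precisely to exclude $\hat g$ conjugate to $(x,y) \mapsto (-x, \text{something})$ type maps of even order whose square could fix more; more carefully, for $n$ odd a diagonalizable linear involution-free element cannot have two isolated fixed points, and cannot have a pointwise-fixed line meeting the curve twice. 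Pinning down this case analysis — ruling out every placement of $p_1, p_2$ relative to $\mathrm{Fix}(\hat g)$ — is where the work lies; the birational compactification is only needed to make precise the interaction of $\hat g$ with the points of $\Lambda$ and to invoke the classification cleanly.
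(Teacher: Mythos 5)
Your treatment of part (1) is essentially the paper's: you place a fixed point of (the extension of) $g$ at infinity and use the embedding $x\mapsto(x,1/P(x))$ with $P$ semi-invariant under $g$. One small slip: with $P(x)=x^{\epsilon}\prod_j(x^n-c_j)$ one has $P(\xi x)=\xi^{\epsilon}P(x)$, not $P(x)$, so when $0\in\Lambda$ the extension must be $(x,y)\mapsto(\xi x,\xi^{-\epsilon}y)$ rather than $(x,y)\mapsto(\xi x,y)$; this is harmless.

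Part (2), however, has a genuine gap at its central step. After linearizing you may indeed assume $\hat g\colon(x,y)\mapsto(\alpha x,\beta y)$ of order $n$, and the fixed points of $g$ on $\Gamma$ are exactly the points of $\tau(\Gamma)\cap\mathrm{Fix}(\hat g)$. If $\alpha\neq1\neq\beta$, then $\mathrm{Fix}(\hat g)=\{(0,0)\}$ and you correctly get a contradiction. But in the remaining case, say $\beta=1$, the set $\mathrm{Fix}(\hat g)$ is the line $\{x=0\}$, and your claim that $\tau(\Gamma)$ meets this line in at most one point ``else $\tau(\Gamma)\subset\{x=0\}$'' is false: an affine curve not contained in a line can meet it in up to $\deg$ points, and nothing in your argument prevents $\tau(\Gamma)\cap\{x=0\}$ from consisting of exactly the two points $p_1,p_2$. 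This is not a corner case but the main one, and no local analysis of $\mathrm{Fix}(\hat g)$ can dispose of it. The paper rules it out by a global argument: take the closure $C$ of the (linearized) image of $\Gamma$ in $\p^2$, all of whose singular points lie on the line at infinity $L$; resolve them by an equivariant sequence of blow-ups; and show by descending induction that for every orbit of (infinitely near) points at infinity the multiplicities are constant on the orbit and their sum is divisible by $n$. Feeding this into the genus formula $0=\tfrac{(d-1)(d-2)}{2}-\sum m(p)(m(p)-1)/2$ and into $d=L\cdot C$ yields $n\mid\tfrac{(d-1)(d-2)}{2}$ and $n\mid d$ (oddness of $n$ is used to divide by $2$), contradicting $\gcd\bigl(d,\tfrac{(d-1)(d-2)}{2}\bigr)\le 2<n$. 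Some such degree/multiplicity bookkeeping at infinity is unavoidable, and your proposal does not contain it.
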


\begin{proof}
$(1)$ Let $P\in\C[x]$ be a non-zero polynomial such that $\Gamma$ is isomorphic to $\A^1\setminus\{x\in\A^1\mid P(x)=0\}$. Let $g\in\Aut(\Gamma)$ be an automorphism that fixes at most one point of~$\Gamma$. Let us denote also by $g$ its extension as an automorphism of~$\p^1$. We can assume that   $g$ fixes the point of~$\p^1$ at infinity, so that it is of the form  $x\mapsto ax+b$, for some $a\in\C^*$ and $b\in \C$. Moreover $P(ax+b)=\mu P(x)$ for some $\mu \in \C^{*}$.

When we embed $\Gamma$ into $\A^2$ via the map $x\mapsto (x,\frac{1}{P(x)})$, the automorphism $g$ extends to $(x,y)\mapsto (ax+b,\mu^{-1}y)$.

$(2)$ Let $g\in\Aut(\Gamma)$ be of finite order $n>1$ with $n$ odd such that it fixes $2$ points of~$\Gamma$. Suppose, for contradiction, that there exists a closed embedding $\tau\colon \Gamma\to \A^2$ for which $g$ extends  to an automorphism $h$ of~$\A^2$. Since $g$ has finite order $n$, the automorphism $h^n\in \Aut(\A^2)$ fixes pointwise the curve $\tau(\Gamma)$. Because $g$ fixes two points of~$\Gamma$, $\tau(\Gamma)$ is  not isomorphic to $\A^1$, hence $h^n$ is trivial by Lemma~\ref{Lemma:fixed points of a plane polynomial automorphism}.

Recall that every automorphism of~$\A^2$ of finite order is conjugate to a linear one (Lemma~\ref{Lemm:FiniteGL2}). Thus, there exists $f\in\Aut(\A^2)$  such that $\hat{h}=f\circ h\circ f^{-1}$ is linear. Moreover, the automorphism $g\in\Aut(\Gamma)$ extends to $\hat{h}$, when we consider the embedding $\hat{\tau}=f\circ\tau\colon \Gamma\to\A^2$.

The linear automorphism $\hat{h}$ extends to an automorphism of~$\p^2$, and the closure of~$\hat{\tau}(\Gamma)$ in $\p^2$ is a projective rational curve $C$, having all its singular points on the line at infinity $L=\p^2\setminus \A^2$.

If $C$ is smooth, it is isomorphic to $\p^1$. Hence, it is a conic or a line, and thus intersects $L$ into $1$ or $2$ points, which contradicts the fact that $g$ acts on $C$ with order~$n>2$ and with no fixed point at infinity. This implies that $C$ is singular.

Denote by $\eta_1\colon X_1\to \p^2$ the blow-up of the points of~$\p^2$ that are singular points of~$C$, and write $C_1\subset X_1$ the strict transform of~$C$ in $X_1$. If $C_1$ is singular, we denote by $\eta_2\colon X_2\to X_1$ the blow-up of the points of~$X_1$ that are singular points of~$C_1$, and write $C_2$ the strict transform of~$C_1$ in $X_2$. We continue like this until we end with a smooth curve $C_m\subset X_m$ such that the intersection of~$C_m$ with all curves contracted by $\eta_1\eta_2\dots\eta_m$ is transversal. Note that $C_m$ is isomorphic to $\p^1$. For $i=1,\dotsc,m$, the lift of~$\hat{h}$ yields an automorphism $h_i$ of~$X_i$ which preserves the curve $C_i$. It also  preserves the pull-back of~$\A^2$ in $X_i$, which is again isomorphic to $\A^2$.

For $i=1,\ldots,m$, we denote by $\mathcal{B}_i\subset C_i$ the (finite) set of points not lying in $\A^2$. Each point $p\in\mathcal{B}_i$ has a multiplicity $m(p)$ as a point of~$C_i$. This multiplicity is a positive integer and it is equal to $1$ if and only if $C_i$ is smooth at this point $p$. Denote by $\mathcal{B}_0$ the set of points of~$C_0=C\subset X_0=\p^2$ not lying in $\A^2$ and let us  use the same notation  as above for the multiplicities of the points of~$\mathcal{B}_0$.

Writing $d$ the degree of~$C\subset \p^2$, the geometric genus of~$C$ can be computed with  the following classical formula. (Note that it is equal to $0$, since $C$ is rational.)

\begin{equation}0=\frac{(d-1)(d-2)}{2}-\sum_{i=0}^m \sum_{p\in \mathcal{B}_i} \frac{m(p)\cdot (m(p)-1)}{2}.\tag{$\star$}\label{genus}\end{equation}

Let us now  prove the following assertion by descending induction on $j\le m$.

\begin{equation}
\begin{array}{l}
\text{Let } j\in \{1,\dots,m\} \text{ and let  }J\subset \mathcal{B}_j\text{ be an orbit under the action of~$h_j$.}\\
\text{Then } m(p)=m(p') \text{ for all } p,p'\in J, \text{ and the integer }\sum\limits_{p\in J} m(p) \text{ is a multiple of n.}\tag{$\diamond$}\label{diamond}\end{array}\end{equation}
For $j=m$, the assertion $(\diamond)$ holds for all orbits $J\subset \mathcal{B}_m$. Indeed,  $C_m$ is isomorphic to $\p^1$ and the action of~$h_m$ on $\mathcal{B}_m\subset C_m$ is fixed-point-free, so all orbits have size $n$ and all multiplicities are equal to $1$.

Then, we can prove $(\diamond)$ for $j<m$, assuming it holds for every integer $k$ with $j+1\leq k\leq m$. For this, let $J\subset \mathcal{B}_j$ be an orbit under the action of~$h_j$ and let us denote by $m_J$ the multiplicity $m(p)$ of a point $p\in J$. Note that this multiplicity does not depend of the choice of~$p$, since $h_j$ acts transitively on $J$.

If $m_J=1$, all points of~$J$ are smooth, and so the pull-back by $\eta_{j+1}$ of~$J$ consists of~$|J|$ points of multiplicity $m_j=1$. This implies $\sum_{p\in J} m(p)\in n\mathbb{N}$, by induction hypothesis.

If $m_J>1$, then all points of~$J$ are singular points of the curve $C_j$ and are thus blown-up by $\eta_{j+1}\colon X_{j+1}\to X_j$. The number $m_J$ is the multiplicity of the curve $C_j$ at the point $p\in J$. Denoting by $E_p\subset X_{j+1}$ the curve contracted by $\eta_{j+1}$ onto $p$, the number $m_J$ is the intersection number $E_p\cdot C_{j+1}$. This latter is equal to the sum of~$m_q(E_p)\cdot  m_q(C_{j+1})$, where $q$ runs through all points infinitely near to $p$, and where $m_q(E_p)$ and $m_q(C_{j+1})$ are the multiplicities of the strict transforms of~$E_p$ and $C_{j+1}$ at $q$, respectively. Note that $m_q(E_p)$ is equal to $0$ or $1$.

Therefore, the sum $\sum _{p\in J} m_J$ is equal to a sum of multiplicities of orbits in $\mathcal{B}_k$ for $k\ge j+1$. By induction hypothesis, it is a multiple of~$n$. This achieves to prove $(\diamond)$.

In order to finish the proof, we will show  how  Equation~(\ref{genus}) and Assertion~(\ref{diamond}) imply that the integers $\frac{(d-1)(d-2)}{2}$ and $d$ are both multiple of~$n$. Since the greatest common divisor of~$d$ and $\frac{(d-1)(d-2)}{2}$ is $1$ or $2$, this will contradict the assumption $n> 2$.

To show that $n$ divides $\frac{(d-1)(d-2)}{2}$, we decompose the sum of (\ref{genus}) according to orbits
$$\frac{(d-1)(d-2)}{2}=\sum_{j=0}^m \sum_{J\subset \mathcal{B}_j}\sum_{p\in J} \frac{m(p)\cdot (m(p)-1)}{2}.$$
By Assertion~(\ref{diamond}), the multiplicities $m(p)$ are all equal among the same orbit $J$, so $\sum_{p\in J} m(p)\cdot (m(p)-1)$ is a multiple of~$\sum_{p\in J} m(p)$, which is a multiple of~$n$ by (\ref{diamond}). Since $n$ is odd, $\sum_{p\in J}\frac{m(p)\cdot (m(p)-1)}{2}$ is also a multiple of~$n$, and so is $\frac{(d-1)(d-2)}{2}$.

It remains to show that $d$ is also a multiple of~$n$. For this, we observe that the intersection number $d=L\cdot C$ is the sum of multiplicities of all points of~$C$ that belong to $L$, as proper or infinitely near points. Since $L$ is invariant under the extension of the affine automorphism $\hat{h}$, the union of these points decomposes into orbits of~$h_j$ and the sum is then a multiple of~$n$ by Assertion~(\ref{diamond}).
\end{proof}

\begin{corollary}\label{Coro:NoExt}
There exist rational smooth affine curves $\Gamma$ with $\Aut(\Gamma) \not=1$ and such that for every closed embedding of~$\Gamma$ in $\A^2$, the identity on $\Gamma$ is its only automorphism which extends to an automorphism of~$\A^2$.
\end{corollary}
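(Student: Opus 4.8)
The plan is to produce a single family of curves to which Theorem~\ref{Thm:PtsInfinity}(2) applies to \emph{every} non-trivial automorphism at once. Since part~(2) of that theorem requires the automorphism to have \emph{odd} finite order and to fix exactly two points of $\Gamma$, I want $\Aut(\Gamma)$ to be a non-trivial finite group all of whose non-trivial elements have odd order and share the same two fixed points on $\p^1$, both lying in $\Gamma$. The classification of finite subgroups of $\Aut(\p^1)=\PGL_2(\C)$ (cyclic, dihedral, or isomorphic to $A_4$, $S_4$, $A_5$) shows that a finite subgroup of odd order is automatically cyclic, so it suffices to arrange $\Aut(\Gamma)\cong\Z/n\Z$ for some odd $n\ge 3$. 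Concretely I would fix such an $n$, a primitive $n$-th root of unity $\zeta$, and the automorphism $g\colon[x:y]\mapsto[x:\zeta y]$ of $\p^1$, whose fixed locus is $\{0,\infty\}$; then I let $\Lambda\subset\p^1\setminus\{0,\infty\}$ be a union of finitely many $\langle g\rangle$-orbits, each necessarily of size $n$, chosen generically --- for instance the two orbits of two general points $a,b\in\C^*$ with $a^n\ne b^n$ --- and set $\Gamma=\p^1\setminus\Lambda$.

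\textbf{Step 1: identify $\Aut(\Gamma)$.} As $\langle g\rangle$ preserves $\Lambda$ we have $\Z/n\Z\cong\langle g\rangle\subseteq\Aut(\Gamma)$, the latter being the group of Möbius transformations permuting $\Lambda$. For the reverse inclusion one checks that, for a generic choice of the orbits, the only such transformations are the rotations $x\mapsto\zeta^k x$; this is the single place where a routine genericity computation is needed, and two general orbits as above already suffice. Hence $\Aut(\Gamma)=\langle g\rangle\cong\Z/n\Z\ne 1$.

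\textbf{Step 2: apply Theorem~\ref{Thm:PtsInfinity}(2).} Every non-trivial $h\in\Aut(\Gamma)=\langle g\rangle$ has finite order dividing $n$, hence odd and $>1$; being a non-trivial power of the rotation $g$, it fixes exactly the two points $0$ and $\infty$ of $\p^1$, both of which belong to $\Gamma$ since $\Lambda$ avoids them. Thus $h$ fixes exactly two points of $\Gamma$, so Theorem~\ref{Thm:PtsInfinity}(2) applies: there is no closed embedding $\tau\colon\Gamma\to\A^2$ for which $h$ extends to an automorphism of $\A^2$. Since this holds simultaneously for every non-trivial $h$, for \emph{any} closed embedding of $\Gamma$ into $\A^2$ the identity is the only automorphism of $\Gamma$ that extends, which is exactly the claim.

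I do not anticipate a serious obstacle: Theorem~\ref{Thm:PtsInfinity}(2) does the real work, and the only point demanding care is Step~1, namely ensuring the chosen $\Lambda$ carries no extra symmetry --- in particular no involution, which would fix a single point of $\Gamma$ and therefore \emph{would} extend for a suitable embedding by Theorem~\ref{Thm:PtsInfinity}(1). Taking $n=3$ with two general orbits yields the smallest instance: a projective line with six points removed, with $\Aut(\Gamma)\cong\Z/3\Z$.
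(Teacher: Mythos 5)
Your overall strategy is exactly the paper's: take $\Gamma=\p^1\setminus\Lambda$ with $\Lambda$ a union of orbits of the rotation $g\colon[x:y]\mapsto[x:\zeta y]$ of odd order $n$, show $\Aut(\Gamma)=\langle g\rangle$, and apply Theorem~\ref{Thm:PtsInfinity}(2) to every non-trivial power of $g$ (all of which fix exactly $0$ and $\infty$, both in $\Gamma$). Step~2 is fine. The gap is in Step~1, and it is not a ``routine genericity computation'': your proposed minimal example with \emph{two} orbits is false for \emph{every} choice of $a,b\in\C^*$. Indeed, if $\Lambda=\{x\mid x^n=a^n\}\cup\{x\mid x^n=b^n\}$, pick $c$ with $c^n=a^nb^n$; then the involution $x\mapsto c/x$ swaps the two orbits, hence preserves $\Lambda$ and lies in $\Aut(\Gamma)$, which is therefore at least dihedral of order $2n$. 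Worse, this involution has even order, so Theorem~\ref{Thm:PtsInfinity}(2) says nothing about it, and Proposition~\ref{Prop:ExtOrder2} shows it \emph{does} extend to $\A^2$ for a well-chosen embedding --- so the conclusion of the corollary actually fails for your curve. (The same anti-rotation also rules out a single orbit, via $c^n=a^{2n}$.)

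To repair this you need at least three orbits: an involution $\alpha\mapsto c^n/\alpha$ permuting three general values $a_1^n,a_2^n,a_3^n$ would force a relation $a_i^{2n}=a_j^n a_l^n$, which generic choices avoid. But then one must also exclude all other M\"obius transformations preserving $\Lambda$, not just the obvious rotations and anti-rotations. This is precisely where the paper does real work: it takes $n=3$, $k\ge 3$ orbits with parameters $a_2,\dots,a_k$ algebraically independent over $\Q$, and uses invariance of cross-ratios to pin down the possible images of a $4$-tuple, reducing any $g\in\Aut(\Gamma)$ to a power of the rotation or one of three explicit anti-rotations, which are then excluded by evaluating at $[a_2:1]$. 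You should either reproduce such an argument or give another concrete verification; as written, the key claim of Step~1 is asserted where it is in fact false.
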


\begin{proof}
Let $\omega=e^{2i \pi/3}$ and  $a_1=1$. Let $a_2,\dots,a_k$ be complex numbers algebraically independent over $\Q$. We consider the curve $\Gamma=\p^1\setminus\Lambda$, where $\Lambda$ is the following set of~$3k$ points
$$\Lambda=\left\{[a_i\omega^j:1]\ |\ i=1,\dots,k,\ j=0,\dots,2\right\}.$$
The  map $h\colon [x:y]\mapsto [x:\omega y]$ is obviously an automorphism of~$\Gamma$. We will now prove that it generates the whole automorphism group $\Aut(\Gamma)$ if $k\ge 3$. This will conclude the proof, since $h$ and $h^2$ do not extend to automorphisms of~$\A^2$ by Theorem~\ref{Thm:PtsInfinity}.

Let $g\in \Aut(\Gamma)$ be an automorphism of~$\Gamma$. It extends to an automorphism of~$\p^1$ that preserves the set $\Lambda$. Let us denote this latter also by $g$.

Consider the $4$-tuple  $V=\left([1:1],[\omega:1],[\omega^2:1],[a_2:1]\right)$.  Since $a_2,\dots,a_k$ are algebraically independent over $\Q$, the image of~$V$ by $g$ is a $4$-tuple of points contained in the set $$S=\left\{[1:1],[\omega:1],[\omega^2:1],[a_2:1],[a_2\omega:1],[a_2\omega^2:1]\right\}.$$ Indeed, the cross-ratio of~$g(V)$ must be equal to the cross-ratio of~$V$, i.e.\ to $\omega(\omega-a_2)/(a_2-1)$.

The same argument with the $4$-tuple $\left([1:1],[\omega:1],[\omega^2:1],[a_3:1]\right)$ allows us to conclude that $g$ preserves the set $\left\{[1:1],[\omega:1],[\omega^2:1]\right\}$. Therefore, $g$ is either a power of~$h$, or it is one of the maps  $\varphi_i\colon [x:y]\mapsto [y:x\omega^i]$ with $i=0\ldots 2$.

Finally, note that $g$ cannot be one of the $\varphi_i$'s, since $\varphi_i$ sends the point $[a_2:1]$ onto the point $[\dfrac{1}{a_2\omega^i}:1]$, which does not belong to the set $S$.
\end{proof}

\begin{remark}
The proof of Corollary~\ref{Coro:NoExt} shows that if $k\geq3$ and if the set $\Lambda\subset\p^1$ is general among all sets of distinct  $3k$ points  invariant by the map $[x:y]\mapsto[x:\omega y]$, then for all closed embeddings of the  curve $\Gamma=\p^1\setminus\Lambda$ into $\A^2$, the identity is the only automorphism of~$\Gamma$ that extends to an automorphism of~$\A^2$.

On the contrary, when $k\leq2$,  every such curve $\Gamma$ admits an automorphism of order~2 and Proposition~\ref{Prop:ExtOrder2} below implies then that this latter extends to an automorphism of~$\A^2$ for a well-chosen closed embedding of~$\Gamma$ into $\A^2$.\end{remark}

\begin{proposition}\label{Prop:ExtOrder2}
Let $\Gamma$ be a rational smooth affine  curve and let $\sigma\in \Aut(\Gamma)$ be an automorphism of~$\Gamma$ of order $2$. There exists a closed embedding of~$\Gamma$ in $\A^2$ and an automorphism $\hat{\sigma}\in \Aut(\A^2)$ of order $2$ whose restriction to $\Gamma$ yields $\sigma$.
\end{proposition}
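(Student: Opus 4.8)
The plan is to first extend $\sigma$ to an involution of $\p^1$, which then has exactly two fixed points $p,q$, and to split into two cases according to whether or not one of these lies in the finite set $\Lambda$ with $\Gamma=\p^1\setminus\Lambda$.

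If one fixed point, say $p$, lies in $\Lambda$, I would choose an affine coordinate $x$ on $\p^1$ with $p=\infty$ and $q=0$, so that $\sigma$ becomes $x\mapsto-x$ and $S:=\Lambda\setminus\{p\}$ is a finite subset of $\A^1$ with $S=-S$. Putting $P(x)=\prod_{s\in S}(x-s)$, one has $P(-x)=\varepsilon P(x)$ with $\varepsilon=(-1)^{|S|}\in\{\pm1\}$, and the standard closed embedding $x\mapsto(x,1/P(x))$ (with image $\{P(x)y=1\}$) is equivariant for the linear involution $(x,y)\mapsto(-x,\varepsilon y)$. This settles the case, which is the easy one.

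The interesting case is when $p,q\in\Gamma$, i.e. $\sigma$ acts on $\Gamma$ with exactly two fixed points. Here I would realize $\Gamma$ inside $\A^2$ as a double cover of a planar model of the quotient $D:=\Gamma/\langle\sigma\rangle$ (a rational smooth affine curve), with $\hat\sigma$ the flip $(x,y)\mapsto(-x,y)$. The quotient map $\pi\colon\Gamma\to D$ is ramified exactly over the two points $\bar p=\pi(p),\bar q=\pi(q)$, and since a degree-$2$ cover $\p^1\to\p^1$ has exactly two branch points, the quadratic extension $\C(\Gamma)/\C(D)$ is $\C(D)(\sqrt f)$ where $f$ becomes $(y-\bar p)(y-\bar q)$ modulo squares once one fixes an affine coordinate $y$ on $\p^1$ placing $\bar p,\bar q$ at finite points and one point at infinity of $D$ at $y=\infty$; thus $D=\A^1\setminus\{h_1,\dots,h_r\}$ with $\C[D]=\C[y,1/R(y)]$, $R=\prod_i(y-h_i)$, the $h_i$ distinct, finite and distinct from $\bar p,\bar q$. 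I would then take
\[
\alpha=\frac{(y-\bar p)(y-\bar q)}{R(y)^2}\in\C[D],\qquad C=V\bigl(x^{2}R(y)^{2}-(y-\bar p)(y-\bar q)\bigr)\subset\A^2,
\]
with $\hat\sigma\colon(x,y)\mapsto(-x,y)$, and check that: the defining polynomial of $C$ is irreducible (its $x$-discriminant $4R(y)^2(y-\bar p)(y-\bar q)$ is not a square in $\C[y]$); no point of $C$ has $R(y)=0$, so $1/R(y)$ is regular on $C$ and $\C[C]=\C[D][x]/(x^2-\alpha)$; $C$ is smooth (a singular point would need $x=0$ and $y\in\{\bar p,\bar q\}$, but there the $y$-derivative of the equation is $\mp(\bar p-\bar q)\ne0$); and, since $R(y)^{-2}$ is a square, $\alpha\equiv(y-\bar p)(y-\bar q)\equiv f$ modulo squares. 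Hence $C$ and $\Gamma$ are both the normalisation of $D$ in $\C(D)(\sqrt f)$, so there is an isomorphism $\Gamma\xrightarrow{\sim}C$ over $D$, which necessarily carries $\sigma$ to the restriction of the flip $\hat\sigma$; composing with $C\hookrightarrow\A^2$ gives the required equivariant closed embedding, with $\hat\sigma$ linear of order $2$.

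The hard part is this second case, and in it the delicate point is the choice of $\alpha$: it must simultaneously have divisor of zeros exactly $\bar p+\bar q$ (so that $C$ is smooth and the double cover ramifies only over $\bar p,\bar q$, which forces $C$ rational rather than of positive genus), be a square multiple of $(y-\bar p)(y-\bar q)$ in $\C(D)$ (so that the cover one builds is $\Gamma$ itself and not some other curve with the same number of punctures), and extend to a pair of generators of $\C[D]$ (so that $C$ is genuinely closed in $\A^2$). The built-in factor $R(y)^{-2}$ reconciles these requirements: it alters neither the class of $\alpha$ modulo squares nor its zeros on $D$, while a B\'ezout relation $A(y)(y-\bar p)(y-\bar q)+B(y)R(y)=1$ gives $1/R(y)=A(y)R(y)\,\alpha+B(y)\in\C[\alpha,y]$, whence $\C[\alpha,y]=\C[D]$ and $(x,y)$ is indeed a closed embedding of $C$.
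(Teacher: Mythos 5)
Your proof is correct, and it reaches essentially the same plane model as the paper -- in the end you exhibit $\Gamma$ as the curve $x^{2}R(y)^{2}=(y-\bar p)(y-\bar q)$ with the linear flip $(x,y)\mapsto(-x,y)$, which after an affine change of the $y$-coordinate is exactly the paper's curve $x^{2}\bigl(\prod_i (y-a_i)\bigr)^{2}=y^{2}-1$ -- but you arrive there by a genuinely different route. The paper first disposes of the case where a fixed point of $\sigma$ lies in $\Lambda$ by citing Theorem~\ref{Thm:PtsInfinity}(1) (you redo this directly, with the explicit equivariant embedding $x\mapsto(x,1/P(x))$ and the involution $(x,y)\mapsto(-x,\varepsilon y)$; note that both fixed points may lie in $\Lambda$, which your argument handles since you only use $S=-S$). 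In the remaining case the paper works \emph{top down}: it embeds the intermediate curve $C=\p^1\setminus\{p,\sigma(p)\}\cong\A^1\setminus\{0\}$ equivariantly as the conic $y^{2}-1=x^{2}$, observes that $\Gamma\subset C$ is the complement of finitely many fibres of the invariant coordinate $y$, and then applies the birational shear $(x,y)\dasharrow\bigl(x/\prod_i(y-a_i),y\bigr)$, which commutes with the flip and maps the open curve isomorphically onto a closed one. You instead work \emph{bottom up}: you pass to the quotient $D=\Gamma/\langle\sigma\rangle$, identify $\C(\Gamma)=\C(D)(\sqrt f)$ with $f\equiv(y-\bar p)(y-\bar q)$ modulo squares via Riemann--Hurwitz and the parity of orders of $f$, and recover $\Gamma$ as the normalisation of $\C[D]$ in this quadratic extension, realised by the explicit smooth plane double cover $V\bigl(x^{2}R(y)^{2}-(y-\bar p)(y-\bar q)\bigr)$; the B\'ezout identity showing $1/R\in\C[C]$ replaces the paper's shear as the mechanism guaranteeing closedness. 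Your route costs a little more machinery (uniqueness of normalisation, the Kummer-theoretic description of the branch locus) but is more systematic: it explains \emph{why} the factor $R(y)^{2}$ must appear (it changes neither the square class nor the zero divisor of $\alpha$, while forcing $1/R$ into the coordinate ring), whereas in the paper this factor emerges from the ad hoc choice of shear. Both arguments are complete; yours would generalise more readily to other quotient situations, while the paper's stays entirely within explicit birational manipulations of plane curves.
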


\begin{proof}
Let $\Gamma=\p^1\setminus \Lambda$, where $\Lambda$ is a finite set of points. Let us denote by $\sigma$ the extension of the automorphism $\sigma\in\Aut(\Gamma)$ as an automorphism of~$\p^1$. If it fixes at most one point of~$\Lambda$,  the result follows from Theorem~\ref{Thm:PtsInfinity}.

We can thus assume that the two points of~$\p^1$ fixed by (the extension of) $\sigma$ belong to $\Gamma$. Let  $p$ be a point of~$\Lambda$. Its  orbit $\{p,\sigma(p)\}$ is then contained in $\Lambda$. Let $C$ be the curve $C=\p^1\setminus\{p,\sigma(p)\}$. Note that $C$ is isomorphic to $\A^1\setminus\{0\}$ and that $\sigma$ restricts to an automorphism of~$C$. Remark that all automorphisms of~$\A^1\setminus\{0\}$ of order $2$ with two fixed points are conjugate to the automorphism $x\mapsto x^{-1}\in\Aut(\Spec(\C[x,x^{-1}]))$.  Therefore, there is a closed embedding $\tau\colon C\to\A^2$ whose image is the curve defined by the equation $$y^2-1=x^2$$
and such that $\sigma\in \Aut(C)$
extends to the automorphism $\hat\sigma\colon(x,y)\mapsto(-x,y)$. Moreover, the curve $\tau(\Gamma)$ is then  equal to a set of points of~$\tau(C)$ satisfying that $\prod_{i=1}^n (y-a_i)\not=0$, for some $n\ge 0$ and distinct $a_1,\dots,a_n\in \C\setminus \{\pm 1\}$.

Let $Y\subset \A^2$ be the closed curve defined by the equation
$$y^2-1=x^2\cdot  \left(\prod_{i=1}^n (y-a_i)\right)^2.$$

Consider finally the birational transformation of~$\A^2$ defined by

$$(x,y)\dasharrow \left(\frac{x}{\prod_{i=1}^n (y-a_i)},y\right),$$
which restricts to an isomorphism between $\tau(\Gamma)$ and $Y$. Since it commutes with the automorphism $(x,y)\mapsto (-x,y)$, this yields the result.
\end{proof}
%
%
%

\section{Planar embeddings in the space}\label{SecPlanarEmbedd}

The following question of Bhatwadekar and Srinivas is asked at the end of \cite{Srinivas}: are any two embeddings of a smooth affine curve in $\A^2$  equivalent, when considered as embeddings in $\A^3$?

The next result answers positively for the case of rational smooth affine curves.

\begin{proposition}\label{Prop:planarEmb}
Let $\Gamma$ be a rational smooth affine  curve.
\begin{enumerate}
\item
If $\tau_1,\tau_2\colon \Gamma\to \A^3$ are two closed embeddings whose images are contained in a hyperplane $($planar embeddings in the space$)$, there exists an automorphism $\alpha\in \Aut(\A^3)$ such that $\tau_2=\alpha\circ \tau_1$, i.e.\ any two planar embeddings in the space are equivalent.
\item
In particular, fixing a planar embedding $\Gamma\to \A^3$, every automorphism of~$\Gamma$ extends to $\A^3$.
\end{enumerate}
\end{proposition}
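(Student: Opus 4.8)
The plan is to reduce statement (1) to a normal-form result for planar embeddings and then deduce (2) as an immediate consequence. I would start from the fact recalled at the beginning of Section~\ref{Sec:planeEmb}: every rational smooth affine curve $\Gamma$ is isomorphic to $\A^1$ minus the zero set of some polynomial $P\in\C[x]$, and so admits the standard closed embedding $\tau_0\colon x\mapsto(x,\tfrac1{P(x)})$ into $\A^2$, whose image is the hypersurface $\{P(x)y=1\}$. Composing with the inclusion $\A^2=\{z=0\}\subset\A^3$ gives a reference planar embedding $\tau_0\colon\Gamma\to\A^3$. It then suffices to show that an arbitrary planar embedding $\tau\colon\Gamma\to\A^3$ is equivalent, via some $\alpha\in\Aut(\A^3)$, to $\tau_0$; equivalence of $\tau_1$ and $\tau_2$ follows by composing the two equivalences.

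The main step is therefore: if $\tau\colon\Gamma\to\A^3$ has image inside a hyperplane $H\cong\A^2$, then there is $\alpha\in\Aut(\A^3)$ carrying $\tau$ to $\tau_0$. First I would use an affine change of coordinates on $\A^3$ to move $H$ to $\{z=0\}$, so that $\tau$ becomes a closed embedding of $\Gamma$ into $\A^2=\{z=0\}$. Now the heart of the matter is a statement purely about the plane: \emph{any two closed embeddings of a rational smooth affine curve into $\A^2$ become equivalent once viewed inside $\A^3$}. To prove this I would invoke the theorem of Abhyankar--Moh / Suzuki and its extensions (the embedding $x\mapsto(x,1/P(x))$ is, up to an automorphism of $\A^2$, the unique embedding of $\Gamma$ with image of minimal degree in a suitable sense), but since in general two planar embeddings of $\Gamma$ need \emph{not} be equivalent in $\A^2$, the key is to gain room by the extra coordinate: a classical ``add a dummy variable'' argument shows that if $g,h\in\C[\Gamma]^\times\cdot\C[\Gamma]$ generate $\C[\Gamma]$ then the two embeddings $(g,h)$ and $(g,h,0)$ versus $\tau_0$ differ by an elementary automorphism of $\A^3$ of the form $(x,y,z)\mapsto(x,y,z+q(x,y))$ followed by a shear exchanging $y$ and $z$. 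Concretely, I would exhibit $\alpha$ as a composition of triangular (de Jonquières) automorphisms of $\A^3$ that first lift $\tau(\Gamma)$ out of the plane, straighten it against the graph-type embedding $\tau_0$, and then project back down.

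Once (1) is established, part (2) is immediate: fix a planar embedding $\tau\colon\Gamma\to\A^3$ and let $\varphi\in\Aut(\Gamma)$. Then $\tau\circ\varphi\colon\Gamma\to\A^3$ is again a planar embedding (its image lies in the same hyperplane), so by (1) there is $\alpha\in\Aut(\A^3)$ with $\tau\circ\varphi=\alpha\circ\tau$; that is, $\alpha$ is an automorphism of $\A^3$ preserving $\tau(\Gamma)$ and restricting to $\varphi$ on $\Gamma$. Hence every automorphism of $\Gamma$ extends.

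The step I expect to be the main obstacle is the plane-embedding normalization inside $\A^3$: controlling that the gluing of triangular automorphisms genuinely realizes the equivalence, i.e.\ that the coordinate functions of $\tau$ together with a suitable polynomial in them generate $\C[\A^3]$ and that the intermediate maps are honest automorphisms of $\A^3$ (not merely birational self-maps). This is where one must use the structure of the coordinate ring of a rational smooth affine curve — in particular that it is a localization of $\C[x]$ — rather than any deep result on $\Aut(\A^3)$, whose amalgamated structure is not available in dimension three.
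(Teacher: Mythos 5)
Your overall strategy coincides with the paper's: reduce to showing that every planar embedding is equivalent in $\A^3$ to the reference embedding $x\mapsto(x,\frac{1}{P(x)},0)$, realize the equivalence by a chain of triangular automorphisms of $\A^3$, and deduce (2) formally from (1) (that deduction is correct). But the step you yourself flag as ``the main obstacle'' is precisely the content of the proposition, and the mechanism you propose for it does not work as stated. The classical dummy-variable argument lets you trade one generating set of $\C[\Gamma]$ for another at the cost of one extra ambient coordinate \emph{per generator being swapped in}: with the single spare coordinate available in $\A^3$ you can introduce one new function (e.g.\ pass from $(Q,R,0)$ to $(x,Q,R)$ using $x=A(Q,R)$), but you cannot then discard \emph{both} $Q$ and $R$ by ``an elementary automorphism followed by a shear'' unless you already know that $x$ together with just \emph{one} of the remaining coordinate functions generates $\C[\Gamma]=\C[x,\frac{1}{P}]$. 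The naive version of your argument proves stable equivalence in $\A^4$, not in $\A^3$.

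The missing idea is the reduction to exactly one redundant coordinate, and this is where the localization structure of $\C[\Gamma]$ actually gets used. The paper first replaces the pair $(Q,R)$ by $(aQ+bR,R)$ for general $a,b\in\C$, so that the new second coordinate has a pole at every zero of $P$; writing it as $Q_1/Q_2$ in lowest terms one then has $P\mid Q_2$, say $Q_2=PS$, and a B\'ezout relation $UQ_1+VP=1$ in $\C[x]$ gives
$$\frac{1}{P}=U\frac{Q_1}{P}+V=SU\,\frac{Q_1}{Q_2}+V\in\C\Bigl[x,\frac{Q_1}{Q_2}\Bigr],$$
whence $\C[x,\frac{Q_1}{Q_2}]=\C[x,\frac{1}{P}]$ and the third coordinate function is a polynomial in the first two. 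Only after this step do two triangular automorphisms (one killing the third coordinate, one converting $\frac{Q_1}{Q_2}$ into $\frac1P$) finish the proof. Without the general linear combination and the B\'ezout computation your chain of de Jonqui\`eres maps has no reason to terminate at the normal form, so the proposal has a genuine gap at its central step; the Abhyankar--Moh--Suzuki theorem you invoke plays no role and would not supply it.
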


\begin{proof}
Let $\Gamma=\A^1\setminus\{x\in\A^1\mid P(x)=0\}$, where $P \in \C[x]$ is a polynomial with simple roots.  Note that the coordinate ring of~$\Gamma$ is $\C[\Gamma]=\C[x,\frac{1}{P(x)}]$ and recall that the map $x\mapsto(x,\frac{1}{P(x)})$ defines a closed embedding of~$\Gamma$ in $\A^2$. To prove the proposition, it suffices to prove that any planar embedding  is equivalent to the one  given by $x\mapsto (x,\frac{1}{P(x)},0)$.

Let $\tau\colon \Gamma \to \A^3$ be a planar embedding of~$\Gamma$. We can compose $\tau$ with an affine automorphism $f_1$ of~$\A^3$ and get an embedding $\tau_2=f_1\circ\tau\colon \Gamma \to \A^3$ of the form $x\mapsto \left(0,  Q(x),R(x)\right)$, where $Q,R\in \C(x)$ are rational functions without poles on $\Gamma$.
Since $\tau_2$ is a closed embedding of the curve $\Gamma$, the equality  $\C[x,\frac{1}{P(x)}]=\C[Q(x),R(x)]$ holds.
In particular, there exists a polynomial $A\in \C[X,Y]$ such that $A(Q(x),R(x))=x$. Now, we compose $\tau_2$ with the automorphism of~$\A^3$ defined by $f_2(X,Y,Z)=(X+A(Y,Z),Y,Z)$ and obtain the embedding $\tau_3\colon \Gamma\to \A^3$ given by
$$\tau_3 \colon x \mapsto\left(x,Q(x),R(x)\right).$$

Because of the equality $\C[x,\frac{1}{P(x)}]=\C[Q(x),R(x)]$, all zeros of~$P(x)$ are poles of~$aQ(x)+bR(x)$ for general complex numbers $a,b\in \C$. We can thus compose $\tau_3$ with a linear automorphism of the form $(X,Y,Z)\mapsto(X,aY+bZ,Z)$ and get an embedding $\tau_4\colon\Gamma\to\A^3$ of the form
$$\tau_4 \colon x \mapsto\left(x,  \frac{Q_1(x)}{Q_2(x)},\frac{R_1(x)}{R_2(x)}\right),$$
\noindent where $Q_1,Q_2,R_1,R_2\in\C[x]$ are polynomials such that $Q_1$ and $Q_2$ (resp.\ $R_1$ and $R_2$) have no common factor, and such that $P(x)$ divides $Q_2(x)$.

In particular, there exist two polynomials $U,V\in \C[x]$ such that $UQ_1+VP=1$. It follows
$$\frac{1}{P}=\frac{UQ_1+VP}{P}=U\frac{Q_1}{P}+V=SU\frac{Q_1}{Q_2}+V,$$
where $S\in\C[x]$ satisfies $PS=Q_2$.

This implies $\C[x,\frac{1}{P}]\subset\C[x,\frac{Q_1}{Q_2}]$ and thus $$\C[x,\frac{Q_1}{Q_2},\frac{R_1}{R_2}]=\C[x,\frac{1}{P}]=\C[x,\frac{Q_1}{Q_2}].$$

Therefore, there exist polynomials $B,C\in \C[X,Y]$ such that $B(x,\frac{Q_1(x)}{Q_2(x)})=\frac{1}{P(x)}-\frac{R_1(x)}{R_2(x)}$ and $C(x,\frac{1}{P(x)})=\frac{Q_1(x)}{Q_2(x)}$. Finally, we consider the automorphisms of~$\A^3$ defined by $f_4(X,Y,Z)=(X,Y,Z+B(X,Y))$ and $f_5(X,Y,Z)=(X,Z,Y-C(X,Z))$. One checks that $f_5\circ f_4\circ\tau_4\colon\Gamma\to\A^3$ is  the desired embedding $x\mapsto (x,\frac{1}{P(x)},0)$.
\end{proof}

Note that the proof above is constructive. In particular, a planar embedding of a smooth rational curve $\Gamma$ in $\A^3$ and an automorphism $\varphi$ of~$\Gamma$ being given, it allows us to construct an explicit automorphism of~$\A^3$ which extends $\varphi$.

\begin{example}
Let $\Gamma$ be the curve $\Gamma=\A^1\setminus\{0,1\}$ and let $\rho\in\Aut(\Gamma)$ be the automorphism of~$\Gamma$ defined by $\rho(x)=1/(1-x)$. We saw in Section~\ref{Sec:planeEmb} that there is no closed embedding of~$\Gamma$ into $\A^2$ such that $\rho$ extends to an automorphism of~$\A^2$. However, it extends to an automorphism of~$\A^3$, when we consider the embedding  $\tau\colon \Gamma\to\A^3$ defined by $x\mapsto(x,1/x(x-1),0)$.

Following the proof of Proposition~\ref{Prop:planarEmb}, we let $f_1,f_2,\ldots,f_5$ be the automorphisms of~$\A^3$ defined by $f_1(X,Y,Z)=(Z,Y,X)$, $f_2(X,Y,Z)=(X+Y+2-YZ^2,Y,Z)$, $f_3(X,Y,Z)=(X,aY+bZ,Z)$, $f_4(X,Y,Z)=(X,Y,Z-\frac{1}{ab}[(b+(a-b)X)(Y-aX+2a)-(a-b)^2](1+X))$ and $f_5(X,Y,Z)=(X,Z,Y-aX+2a+aZ+(b-a)XZ)$, where $a,b\in\C$ are general complex numbers.

Setting  $F=f_5\circ f_4\circ\cdots\circ  f_1$, one checks $F\circ\tau\circ\rho=\tau$. This implies that $F^{-1}$ is an extension of the automorphism $\rho\in\Aut(\Gamma)$.
\end{example}

\begin{remark}
To our knowledge, there is no known example of a smooth affine curve admitting two non-equivalent embeddings into $\A^3$.
Paradoxically, we do not know any smooth affine curve such that all its embeddings into $\A^3$ are equivalent!

The case of the affine line is of particular interest. On one hand, all closed embeddings of~$\A^1$ into $\A^2$ are equivalent by the famous Abhyankar-Moh-Suzuki theorem. On the other hand, all closed embeddings of~$\A^1$ into $\A^n$ with $n\geq4$ are also equivalent (see \cite{Srinivas} or \cite{Kaliman}).
\end{remark}

\section{Actions of~$\SL(2,\C)$ on $\End(\A^2)$ and of~$\PGL(2,\C)$ on $\p^1$}\label{SecSL2}

The aim of this section is to construct, for every non-empty subset $\Lambda$ of~$\p^1$ that is invariant by a subgroup $H$ of~$\Aut(\p^1)$, a $H$-equivariant endomorphism of~$\p^1$ whose fixed-point set is equal to the set $\Lambda$ (Corollary~\ref{Coro:ExistHequi}). We will use this result later on to construct embeddings of every rational smooth  affine curve into $\A^3$ in such a way that the whole automorphism group of the curve extends to a subgroup of~$\Aut(\A^3)$.

\medskip

For the rest of the paper we will consider the following actions of the group $\SL(2,\C)$ on $\mathcal{O}(\A^2)=\C[x,y]$ and $\End(\A^2)=\C[x,y]\times \C[x,y]$.
$$\begin{array}{ccc}
\SL(2,\C) \times \mathcal{O}(\A^2)& \to & \mathcal{O}(\A^2)\\
(g,P)& \mapsto & g\cdot P: =P\circ g^{-1}\end{array}$$
and
$$\begin{array}{ccc}
\SL(2,\C) \times \End(\A^2)& \to & \End(\A^2)\\
(g,F)& \mapsto & g\cdot F:= g \circ F \circ g^{-1}.\end{array}$$

Note that these actions come from the natural action of~$\SL(2,\C)$ on $\A^2$. Indeed, denote by $V$ the space $\A^2$ as a complex vector space of dimension~$2$ and identify the set of the linear forms on it as the dual space $V^{*}$. The action of~$\SL(V)$ on $V$ yields   actions on $V^{*}$ , on the symmetric algebra $S(V^{*})$ and on $S(V^{*})\otimes V$. The natural isomorphisms between $S(V^{*})$ and   $\C[x,y]=\mathcal{O}(\A^2)$ , and between $S(V^{*})\otimes V$ and $\C[x,y]\times \C[x,y]=\End(\A^2)$, lead then to the $\SL(2,\C)$-actions that we  defined above.

\begin{lemma}\label{Lemm:SL2Cequivariant}
The map $\rho\colon \End(\A^2)\to \mathcal{O}(\A^2)$ defined by
$$\begin{array}{ccc}
\C[x,y]\times \C[x,y]& \to & \C[x,y]\\
(f_1,f_2)&\mapsto &f_1y-f_2x \end{array}
$$
is $\SL(2,\C)$-equivariant, when we consider the actions defined above.
\end{lemma}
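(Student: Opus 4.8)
The plan is to verify the equivariance directly from the definitions, exploiting the conceptual description of $\rho$ as a natural map of $\SL(2,\C)$-representations rather than grinding the polynomial identity. First I would observe that the map $\rho\colon S(V^*)\otimes V \to S(V^*)$ sending $(f_1,f_2)$ to $f_1 y - f_2 x$ is, up to the chosen identifications, induced by a natural pairing: writing an element of $S(V^*)\otimes V$ as $\sum_i g_i \otimes v_i$ with $g_i \in S(V^*)$ and $v_i \in V$, the map is $\sum_i g_i \otimes v_i \mapsto \sum_i g_i \cdot \ell(v_i)$, where $\ell\colon V \to V^* \subset S(V^*)$ is a fixed isomorphism, followed by multiplication in $S(V^*)$. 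Concretely, if $(x,y)$ denotes the standard basis of $V^*$ dual to the standard basis $(e_1,e_2)$ of $V$, then $\ell$ is the isomorphism $e_1 \mapsto -y$, $e_2 \mapsto x$ (this is the map $V \cong \wedge^2 V^* \otimes V^* \cong V^*$, using that $\SL(2,\C)$ preserves $\wedge^2 V^*$, i.e.\ the determinant form). Since $\ell$ is $\SL(2,\C)$-equivariant, and multiplication $S(V^*)\otimes S(V^*)\to S(V^*)$ is $\SL(2,\C)$-equivariant (it is an algebra structure on a representation), the composite $\rho$ is equivariant.

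Because the reader may prefer an elementary check, I would also include the short direct computation. Fix $g \in \SL(2,\C)$ and $F = (f_1,f_2) \in \End(\A^2)$. By definition $g\cdot F = g \circ F \circ g^{-1}$; writing $g = \begin{pmatrix} a & b \\ c & d\end{pmatrix}$ with $ad-bc=1$, the components of $g\cdot F$ are $(a\, (f_1\circ g^{-1}) + b\, (f_2 \circ g^{-1}),\; c\, (f_1 \circ g^{-1}) + d\, (f_2 \circ g^{-1}))$. On the other hand $g\cdot(\rho(F)) = \rho(F)\circ g^{-1} = (f_1 \circ g^{-1})\, (y\circ g^{-1}) - (f_2\circ g^{-1})\, (x \circ g^{-1})$, where $x \circ g^{-1}$ and $y \circ g^{-1}$ are the two components of $g^{-1}$, namely, since $g^{-1} = \begin{pmatrix} d & -b \\ -c & a\end{pmatrix}$, we have $x\circ g^{-1} = dx - by$ and $y \circ g^{-1} = -cx + ay$. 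Then
\[
\rho(g\cdot F) = \bigl(a (f_1\circ g^{-1}) + b(f_2\circ g^{-1})\bigr)\,y - \bigl(c(f_1\circ g^{-1}) + d(f_2\circ g^{-1})\bigr)\,x,
\]
and expanding both expressions and collecting the coefficients of $f_1 \circ g^{-1}$ and of $f_2 \circ g^{-1}$, one finds $\rho(g\cdot F) = (f_1\circ g^{-1})(ay - cx) - (f_2\circ g^{-1})(dx - by) = (f_1 \circ g^{-1})(y\circ g^{-1}) - (f_2\circ g^{-1})(x\circ g^{-1}) = g\cdot \rho(F)$, which is exactly the claimed identity. The only place where $\det g = 1$ is used is in writing $g^{-1}$ with entries $d,-b,-c,a$; this is precisely why the statement is for $\SL(2,\C)$ and not $\GL(2,\C)$.

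There is no real obstacle here: the content of the lemma is bookkeeping, and the single subtle point is making sure the signs in $\rho(f_1,f_2) = f_1 y - f_2 x$ match the $\SL_2$-invariant pairing $V \otimes V \to \wedge^2 V \cong \C$ so that the determinant-one condition does the work. I would present the conceptual argument as the main proof and append the matrix computation as a remark or a parenthetical verification, since together they make the statement transparent and pin down the sign conventions that will be needed in the sequel.
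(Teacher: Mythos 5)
Your proposal is correct and takes essentially the same route as the paper, whose proof also factors $\rho$ through the $\SL_2$-invariant determinant pairing (the paper's composition $\tau_2\circ\tau_1$); your appended matrix computation is exactly the verification the paper says ``could of course be checked by direct computations'' without carrying it out. One harmless quibble: with the identification $(f_1,f_2)\leftrightarrow f_1\otimes e_1+f_2\otimes e_2$, the equivariant isomorphism $v\mapsto\det(v,\cdot)$ sends $e_1\mapsto y$ and $e_2\mapsto -x$ (the opposite signs to those you wrote, which correspond to $\det(\cdot,v)$ and yield $-\rho$), but since $-\rho$ is equivariant if and only if $\rho$ is, this does not affect the argument.
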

\begin{proof}
The result could of course be checked by direct computations, but let us  mention that it also follows from the fact that $\rho$ corresponds to the morphism $S(V^{*})\otimes V\to S(V^{*})$ given by the composition $\tau_2\circ \tau_1$, where $\tau_1$ and $\tau_2$ are the two following homomorphisms of~$\SL(V)$-modules.
$$\begin{array}{rrcl}
\tau_1\colon &S(V^*) \otimes V &\to& S(V^*) \otimes V \otimes V^* \otimes V \\
&p \otimes v& \mapsto &p \otimes v \otimes {\rm id},\\
\end{array}$$
where ${\rm id}$ denotes the identity element seen as an element of~$ V^* \otimes V = {\rm Hom}(V,V)$, and

$$\begin{array}{rrcl}
\tau_2\colon & S(V^*) \otimes V \otimes V^* \otimes V  &\to& S(V^*)\\
&p \otimes v_1 \otimes v_2^* \otimes v_3 &\mapsto& \det (v_1,v_3) (p v_2^*).\\
\end{array}$$
\end{proof}

\begin{lemma}\label{Lemm:EpFixed}
Let $G\subset \SL(2,\C)$ be a finite subgroup of~$\SL(2,\C)$ and let $P\in\C[x,y]$. The following conditions are equivalent:

\begin{enumerate}
\item
The polynomial $P$ satisfies $P(0,0)=0$ and is fixed by $G$.
\item
There exists an endomorphism  $F=(f_1,f_2)$ of~$\A^2$ that is fixed by $G$ and  such that $\rho(F)=f_1y-f_2x=P$.
\end{enumerate}
\end{lemma}

\begin{proof}
Let $E_P\subset \End(\A^2)$ be the set
$$E_P=\rho^{-1}(P)=\left\{(f_1,f_2)\in \C[x,y]\times \C[x,y]\ |\ f_1y-f_2x=P\right\}.$$
This defines an affine subset of the $\C$-vector space $\End(\A^2)$, since the endomorphism $(\lambda f_1+(1-\lambda)f_3,\lambda f_2+(1-\lambda)f_4)$ belongs to $E_P$, for any $(f_1,f_2), (f_3,f_4)\in E_P$ and any $\lambda\in \C$. Moreover, $E_P$ is non-empty if and only if $P(0,0)=0$.

If $F\in\End(\A^2)$ is fixed by $G$ and belongs to $E_P$, then
$$g\cdot P=g\cdot\rho(F)=\rho(g\cdot F)=\rho(F)=P$$
hold for any $g\in G$. This shows $(2)\Rightarrow (1)$.

If $P$ is fixed by $G$, then the set $E_P$ is invariant by $G$, since
$$\rho(g\cdot F)=g\cdot\rho(F)=g\cdot P=P$$
hold for any $F\in E_P$ and $g\in G$.

Therefore, if $F$ belongs to $E_P$, then $\frac{1}{|G|} \sum_{g\in G} g\cdot F$ is an element of~$E_P$ that is fixed by $G$. This shows $(1)\Rightarrow (2)$ and concludes the proof.
\end{proof}

\begin{proposition}\label{Prop:TechGH}
Let $H\subset \PGL(2,\C)=\Aut(\p^1)$ be a finite subgroup and set $G=\pi^{-1}(H)$, where $\pi\colon \SL(2,\C)\to \PGL(2,\C)$ is the canonical surjective map. Let $\Lambda\subset \p^1$ be a non-empty $H$-invariant finite subset.
\begin{enumerate}
\item
There exist homogeneous polynomials $f_1,f_2\in \C[x,y]$ of the same degree such that
$(f_1,f_2)$ is an endomorphism of~$\A^2$ fixed by $G$ and such that
$$\Lambda=\left\{[x:y]\in \p^1\mid f_1(x,y)y-f_2(x,y)x=0\right\}.$$
\item
The morphism $\delta\colon \p^1 \to \p^1$ defined by $$[x:y]\mapsto [f_1(x,y):f_2(x,y)]$$ is $H$-equivariant, for all pairs $(f_1,f_2)$ given by the statement $(1)$ above.
\item
There exist polynomials $f_1,f_2$ satisfying the statement $(1)$ and also the extra property  $$\Lambda=\left\{q\in \p^1 \mid \delta(q)=q\right\}.$$
This latter holds moreover for all pairs $(f_1,f_2)$ given by the statement $(1)$, in the case where the set $\Lambda$ consists of  exactly one orbit of~$H$.
\end{enumerate}
\end{proposition}

\begin{proof}
$(1)$ We let $p\in \C[x,y]$ be the (unique up a nonzero constant) square-free homogeneous polynomial whose roots correspond to the points of~$\Lambda$. Because $\Lambda$ is invariant by $H$, there exists a character $\chi \colon G\to \C^{*}$ such that
$$p\circ g=\chi(g)p,$$
for all $g\in G$. Since $G$ is finite, there exists a positive integer $d$ such that the polynomial $P=p^d$ is fixed by $G$.

By Lemma~\ref{Lemm:EpFixed}, there exists an endomorphism $(f_1,f_2)\in \C[x,y]\times \C[x,y]$ of~$\A^2$ that is fixed by $G$ and such that $f_1y-f_2x=P$. Since $P$ is homogeneous and since the action of~$G$ on $\End(\A^2)$ is linear and preserves the filtration by degrees, we can assume that $f_1$ and $f_2$ are homogeneous of the same degree. This proves $(1)$.

Statement $(2)$ follows directly from the fact that the endomorphism $(f_1,f_2)$ is fixed by $G$.

$(3)$ Since $\delta$ is $H$-equivariant, its fixed-point set is invariant by $H$. Let us denote it by $\Omega_{\delta}$ and write $f_1=\alpha\tilde{f_1}$ and $f_2=\alpha\tilde{f_2}$, where $\alpha, \tilde{f_1}, \tilde{f_2}$ are homogeneous polynomials such that $\tilde{f_1}$ and $\tilde{f_2}$ have no common root in $\p^1$. Then,  $\delta([x:y])=[\tilde{f_1}(x,y):\tilde{f_2}(x,y)]$ holds for all  $[x:y]\in \p^1$. The set $\Omega_\delta=\{q\in \p^1 \ |\ \delta(q)=q\}$ is thus the zero set of~$\tilde{f_1}y-\tilde{f_2}x$. In particular, it is non-empty. Moreover, the equalities $P=f_1 y - f_2 x =\alpha(\tilde{f_1}y-\tilde{f_2}x)$ imply that $\Omega_{\delta}$ is contained in $\Lambda$.

If $\Lambda$ consists of exactly one orbit of~$H$, then $\Omega_\delta=\Lambda$ follows from the fact that $\Omega_{\delta}$ is invariant by $H$.

Let us now consider the general case, where $\Lambda$ consists of~$r>1$ orbits of~$H$ and write $\Lambda=\bigcup_{i=1}^r \Lambda_i$, where $\Lambda_1,\dots,\Lambda_r$ are disjoint orbits of~$H$. For each $i$, there exist,  by the previous argument, homogeneous polynomials $f_{i,1}, f_{i,2}$ of the same degree such that the zero set of~$P_i=f_{i,1}y-f_{i,2}x$ is equal to $\Lambda_i$ and such that the pair $(f_{i,1} ,f_{i,2})$ defines an endomorphism of~$\A^2$ which is fixed by $G$.

Set
$$g_1=\frac{1}{r}\sum_{i=1}^r \left(f_{i,1} \prod_{j\not=i} P_j\right)\quad\text{and}\quad
g_2=\frac{1}{r}\sum_{i=1}^r \left(f_{i,2} \prod_{j\not=i} P_j\right).$$
Note that $g_1$ and $g_2$ are homogeneous of the same degree and satisfy the equality $g_1y-g_2x=\prod_{i=1}^r P_i$. Moreover,  the endomorphism $(g_1,g_2)\in \End(\A^2)$ is fixed by $G$. In other words, it satisfies the statement $(1)$ of the lemma.

We will now show that the set $\Omega_{\tilde{\delta}}$ of fixed points of the morphism $\tilde{\delta}\colon\p^1\to\p^1$ defined by $\tilde{\delta}([x:y])=[g_1(x,y):g_2(x,y)]$ is  equal to $\Lambda$, which will conclude the proof. Note that it is contained in $\Lambda$ and invariant under the action of  $H$, since $\tilde{\delta}$ is $H$-equivariant.

Let us write $g_1=\beta \tilde{g_1}$ and $g_2=\beta\tilde{g_2}$, where $\beta, \tilde{g_1},\tilde{g_2}$ are homogeneous and $\tilde{g_1},\tilde{g_2}$ have no common root in $\p^1$. Note that the set $\Omega_{\tilde{\delta}}$ is equal to the zero set of the homogeneous polynomial $\tilde{g_1}y-\tilde{g_2}x$ .

We claim that none of the $P_i$ divides $\beta$. Indeed, otherwise such a $P_i$ would divide both $g_1$ and $g_2$ and thus also $f_{i,1} \prod_{j\not=i} P_j$ and  $f_{i,2} \prod_{j\not=i} P_j$. Since $P_i$ has no common root with any of the $P_j$, this would imply that $P_i$ divides $f_{i,1}$ and $f_{i,2}$.  This is impossible, since  $P_i=f_{i,1}y-f_{i,2}x$, hence $P_i$ has degree bigger than $f_{i,1}$ and $f_{i,2}$.

Therefore, it follows  from the equalities $$\prod_{i=1}^r P_i=g_1y-g_2x=\beta( \tilde{g_1}y-\tilde{g_2}x)$$
that, for every index $i$, at least one point of~$\Lambda_i$ is contained in $\Omega_{\tilde{\delta}}$. This latter set being invariant by $H$ and  $\Lambda_i$ being an orbit under the action of~$H$, we get that the whole set $\Lambda_i$ is contained in $\Omega_{\tilde{\delta}}$, for each $i=1\ldots r$. This achieves the proof.
\end{proof}

\begin{corollary}\label{Coro:ExistHequi}
Let $H\subset \PGL(2,\C)=\Aut(\p^1)$ be a finite subgroup and let $\Lambda\subset \p^1$ be a finite subset. The following conditions are equivalent:

\begin{enumerate}
\item
The set $\Lambda$ is non-empty and invariant by $H$.
\item
There exists a $H$-equivariant morphism $\delta\colon \p^1\to \p^1$ such that $$\Lambda=\{q\in \p^1\ |\ \delta(q)=q\}.$$
\end{enumerate}
\end{corollary}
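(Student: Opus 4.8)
The plan is to deduce the corollary almost for free from Lemma~\ref{Lemm:TechGH}, in which all the substantial work has already been done.

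For the implication $(1)\Rightarrow(2)$, suppose $\Lambda$ is non-empty, finite and $H$-invariant, and set $G=\pi^{-1}(H)$. I would invoke Lemma~\ref{Lemm:TechGH}: its parts $(1)$ and $(3)$ furnish homogeneous polynomials $f_1,f_2\in\C[x,y]$ of the same degree such that $(f_1,f_2)$ is an endomorphism of $\A^2$ fixed by $G$ and such that the associated morphism $\delta\colon\p^1\to\p^1$, $[x:y]\mapsto[f_1(x,y):f_2(x,y)]$, has fixed-point set exactly $\Lambda$; part $(2)$ of the same lemma then says that this $\delta$ is $H$-equivariant. This morphism $\delta$ is precisely what condition $(2)$ requires.

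For the converse $(2)\Rightarrow(1)$, suppose $\delta\colon\p^1\to\p^1$ is an $H$-equivariant morphism whose fixed-point set is $\Lambda$. That $\Lambda$ is $H$-invariant is immediate from equivariance: if $\delta(q)=q$ and $h\in H$, then $\delta(h\cdot q)=h\cdot\delta(q)=h\cdot q$, so $h\cdot q\in\Lambda$. For non-emptiness, I would use that every self-morphism of $\p^1$ has a fixed point: writing $\delta=[f_1:f_2]$ with $f_1,f_2\in\C[x,y]$ homogeneous of the same degree and without common root, the fixed-point set of $\delta$ is the zero set of the homogeneous polynomial $f_1y-f_2x$, which is either all of $\p^1$ (precisely when $f_1y-f_2x\equiv0$, i.e.\ $\delta=\id$) or a non-empty finite subset of $\p^1$. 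In both cases $\Lambda\neq\emptyset$, so $(1)$ holds. (One may also simply note that $\Lambda$ being finite forces $\delta\neq\id$, which makes this last point even shorter.)

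There is no genuine obstacle here beyond Lemma~\ref{Lemm:TechGH} itself; the only point that deserves to be written out carefully is the elementary fact that the fixed locus of a morphism $\p^1\to\p^1$ is never empty.
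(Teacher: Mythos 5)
Your proposal is correct and follows essentially the same route as the paper: the forward implication is read off from Lemma~\ref{Lemm:TechGH}, invariance of the fixed locus follows from equivariance, and non-emptiness comes from writing $\delta=[f_1:f_2]$ with $f_1,f_2$ coprime and observing that the fixed locus is the zero set of $f_1y-f_2x$. Your extra remark that finiteness of $\Lambda$ rules out $\delta=\id$ is a harmless refinement of the same argument.
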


\begin{proof}
The implication $(1)\Rightarrow (2)$ follows directly from Proposition~\ref{Prop:TechGH}. Let us prove the other one.

Let $\delta\colon\p^1\to\p^1$ be a $H$-equivariant morphism whose fixed-point set is equal to $\Lambda$. The set $\Lambda$ is then invariant under the action of~$H$, since  $\delta(h(q))=h(\delta(q))=h(q)$ hold  for all $h\in H$ and all $q\in\Lambda$.

Furthermore, let $f_1,f_2 \in \C [x,y]$ be two homogeneous polynomials of the same degree and without common root in $\p^1$ such that  $\delta ( [x:y]) = [f_1(x,y) : f_2 (x,y) ]$ for all points $[x:y] \in \p^1$. Since $\Lambda$ is the zero set of~$f_1y-f_2x$, it is clearly non-empty.
\end{proof}

\section{Equivariant embeddings into the affine three-space}\label{Sec:EmbQuad}

Let us recall that the following morphism
$$\begin{array}{rcl}
\p^1\times \p^1&\hookrightarrow &\p^3\\
\left([y_0:y_1],[z_0:z_1]\right)& \mapsto &[y_0z_0:y_0z_1:y_1z_0:y_1z_1]\end{array}$$
is a classical closed embedding of~$\p^1\times\p^1$ into $\p^3$ and that it induces an isomorphism between  $\p^1\times \p^1$ and the quadric in $\p^3$ defined by the equation $x_0x_3=x_1x_2$. Moreover, since this embedding is canonical (it is given by the linear system  $\lvert-\frac{1}{2}K_{\p^1\times\p^1}\rvert$), every automorphism of~$\p^1\times \p^1$ extends to a unique automorphism of~$\p^3$.

Identifying $\A^3$ with the complement in $\p^3$ of the hyperplane defined by the equation $x_1=x_2$, we obtain a closed embedding $(\p^1\times \p^1)\setminus \Delta\hookrightarrow \A^3$,
where $\Delta$ denotes the diagonal curve $\Delta=\{(q,q)\ |\ q \in \p^1\}\subset \p^1\times \p^1$.

Consider the diagonal action of~$\PGL(2,\C)=\Aut(\p^1)$ on $\p^1\times\p^1$. Note that each automorphism of~$\p^1\times\p^1$ coming from this action extends to an automorphism of~$\p^3$ which preserves the plane of equation $x_1=x_2$. This yields an  action of~$\PGL(2,\C)$ on $\A^3$ for which the  closed embedding  $(\p^1\times \p^1)\setminus \Delta\hookrightarrow \A^3$, that we defined above,  becomes  $\PGL(2,\C)$-equivariant.

After a change of coordinates in $\A^3$, we obtain a $\PGL(2,\C)$-equivariant embedding of~$(\p^1\times\p^1)\setminus\Delta$ into $\A^3$, where the action of~$\PGL(2,\C)$ on $\A^3$ is linear.

\begin{lemma} \label{Lem:ClosedEmbP1P1dC3}
The morphism $$\begin{array}{cccl}
\iota\colon& (\p^1\times \p^1)\setminus \Delta&\hookrightarrow& \A^3\\
&([y_0:y_1],[z_0:z_1]) & \mapsto & \left(\dfrac{y_0z_1+y_1z_0}{y_0z_1-y_1z_0}, \dfrac{2y_0z_0}{y_0z_1-y_1z_0},\dfrac{2y_1z_1}{y_0z_1-y_1z_0}\right)\end{array}$$
is a  closed embedding whose image is  the hypersurface of~$\A^3$ defined by the equation $yz=x^2-1$.

Moreover, $\iota$ is $\PGL(2,\C)$-equivariant, when we consider the actions of~$\PGL(2,\C)$ on $ (\p^1\times \p^1)\setminus \Delta$ and  $\A^3$  defined by
\[\begin{array}{rcl}
\PGL(2,\C)\times (\p^1\times \p^1)\setminus \Delta& \to &(\p^1\times \p^1)\setminus \Delta\\
\left( 
\left(\begin{smallmatrix} a & b \\ c & d \end{smallmatrix} \right)
,([y_0:y_1],[z_0:z_1]) \right)&\mapsto& ([ay_0+by_1:cy_0 +dy_1],[az_0+bz_1:cz_0+dz_1]) \end{array}\]
and
$$\begin{array}{rcl}
\PGL(2,\C)\times\A^3& \to &\A^3 \\
\left(\left(\begin{array}{cc}
a & b \\
c & d\end{array}\right), \left(\begin{array}{c} x \\ y \\ z \end{array}\right) \right)&\mapsto&  \dfrac{1}{ad-bc}  \left(\begin{array}{ccc} ad+bc&ac&bd\\ 2ab&a^2&b^2\\ 2cd&c^2&d^2 \end{array}\right) \cdot \left(\begin{array}{c} x \\ y \\ z \end{array}\right).  \end{array}$$
\end{lemma}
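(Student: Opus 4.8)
The statement bundles three assertions: that $\iota$ is a well-defined morphism landing in the surface $\{yz = x^2-1\}$, that it is a closed embedding with this surface as image, and that it intertwines the two displayed $\PGL(2,\C)$-actions. The plan is to verify these in that order, reducing the third (the potentially messy one) to a computation that can be organized cleanly.

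First I would check well-definedness and the target equation. The map is given by formulas homogeneous of degree $0$ separately in $[y_0:y_1]$ and $[z_0:z_1]$, so it is a morphism on the locus where the denominator $y_0z_1 - y_1z_0$ is nonzero; but that vanishing locus is exactly the diagonal $\Delta$, so $\iota$ is defined on all of $(\p^1\times\p^1)\setminus\Delta$. Then I would substitute: with $x = \frac{y_0z_1+y_1z_0}{y_0z_1-y_1z_0}$, $y = \frac{2y_0z_0}{y_0z_1-y_1z_0}$, $z = \frac{2y_1z_1}{y_0z_1-y_1z_0}$, one has $x^2 - 1 = \frac{(y_0z_1+y_1z_0)^2 - (y_0z_1-y_1z_0)^2}{(y_0z_1-y_1z_0)^2} = \frac{4y_0z_1y_1z_0}{(y_0z_1-y_1z_0)^2} = yz$, which is the claimed identity.

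Next, to see that $\iota$ is a closed embedding onto $S := \{yz = x^2-1\}$, I would exhibit the inverse. Given a point $(x,y,z)\in S$, the two ``rulings'' of the quadric recover $[y_0:y_1]$ and $[z_0:z_1]$: concretely, since $yz = (x-1)(x+1)$, one reads off $[y_0:y_1] = [x+1:z] = [y:x-1]$ and $[z_0:z_1] = [x+1:y] = [z:x-1]$ (using $yz=x^2-1$ to see these are consistent and not simultaneously $[0:0]$), and these lie off $\Delta$ precisely because $(x,y,z)$ is a genuine point of $\A^3$ rather than a point at infinity. Checking that this rational map $S\to (\p^1\times\p^1)\setminus\Delta$ is a morphism and is a two-sided inverse to $\iota$ is a short direct verification; alternatively one can invoke the discussion preceding the lemma, where $S$ is identified with the image of $(\p^1\times\p^1)\setminus\Delta$ under the Segre embedding composed with a linear change of coordinates, so that $\iota$ is a closed embedding by construction and only the explicit formula needs to be matched up.

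The main obstacle, as expected, is the equivariance. I would verify it by picking $g = \left(\begin{smallmatrix} a & b \\ c & d\end{smallmatrix}\right)$ with $ad-bc = 1$ (the class in $\PGL_2$ has such a representative up to scalar, and both actions are insensitive to that scalar — one should note this), computing $\iota(g\cdot([y_0:y_1],[z_0:z_1]))$ by plugging $ay_0+by_1, cy_0+dy_1$, etc.\ into the formula for $\iota$, and comparing with the result of applying the displayed $3\times 3$ matrix to $\iota([y_0:y_1],[z_0:z_1])$. Both sides are rational expressions in $a,b,c,d,y_0,y_1,z_0,z_1$; clearing the common denominator reduces the claim to a finite list of polynomial identities. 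Rather than grinding all of these by hand, I would reduce the work by using that $\PGL(2,\C)$ is generated by the elements $\left(\begin{smallmatrix} t & 0 \\ 0 & 1\end{smallmatrix}\right)$, $\left(\begin{smallmatrix} 1 & s \\ 0 & 1\end{smallmatrix}\right)$ and $\left(\begin{smallmatrix} 0 & 1 \\ 1 & 0\end{smallmatrix}\right)$, and since the assignment $g\mapsto(\text{the }3\times3\text{ matrix})$ is visibly a group homomorphism (this itself is a short check, or follows because the $3\times 3$ matrices are exactly the action of $\PGL_2$ on the space of symmetric bilinear forms / on $\mathrm{Sym}^2$ after diagonalizing), it suffices to check $\iota$-equivariance on these three generators, where the formulas become essentially trivial. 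This last reduction is what turns a potentially unpleasant computation into a routine one.
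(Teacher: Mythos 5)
Your overall route is the same as the paper's: check the image lands on $yz=x^2-1$, exhibit an explicit inverse on that quadric using the two rulings, and verify equivariance by direct computation (your reduction to generators of $\PGL(2,\C)$, justified by observing that $g\mapsto$ the $3\times 3$ matrix is a homomorphism --- it is $\Sym^2$ of the standard representation twisted by $\det^{-1}$ --- is a reasonable way to organize that check, and you correctly note both that this homomorphism property is needed and that the formulas are insensitive to rescaling the representative of $g$).

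There is, however, a concrete error in your inverse: the homogeneous coordinates of the second factor are transposed. Writing $D=y_0z_1-y_1z_0$, one has $x+1=\frac{2y_0z_1}{D}$ and $x-1=\frac{2y_1z_0}{D}$, whence $[y_0:y_1]=[x+1:z]=[y:x-1]$ (as you wrote) but $[z_0:z_1]=[y:x+1]=[x-1:z]$, \emph{not} $[x+1:y]=[z:x-1]$; the latter pair equals $[z_1:z_0]$. As written, your map is $\iota^{-1}$ composed with the swap $[z_0:z_1]\mapsto[z_1:z_0]$ on the second factor, so it is not a two-sided inverse, and it does not even land off the diagonal: for instance $\iota([1:0],[0:1])=(1,0,0)$, and your formula sends $(1,0,0)$ to $([1:0],[1:0])\in\Delta$. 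The ``short direct verification'' you defer would have caught this, and the fix is simply to invert those two ratios, after which the argument goes through as in the paper.
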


\begin{proof}
Let $Q$ denotes the quadric hypersurface of~$\A^3$ defined by the equation $yz=x^2-1$. One checks that the morphism $\iota$ induces an isomorphism between $(\p^1\times \p^1)\setminus\Delta$ and $Q$ whose inverse morphism is given by
$$\begin{array}{rcl}
Q& \to &\ \ \  \ (\p^1\times \p^1)\setminus\Delta\\
(x,y,z)&\mapsto &\left\{\begin{array}{ll} ([x+1:z],[y:x+1])& \mbox{ if } x\not=-1, \\
([y:x-1],[x-1:z]) & \mbox{ if } x\not=1.\end{array}\right.
\end{array}$$
It is also straightforward to check  that $\iota$ is $\PGL(2,\C)$-equivariant for the given actions.
\end{proof}

Combining the latter lemma with the results of the previous section, we finally get $\Aut(\Gamma)$-equivariant embeddings of every smooth affine rational curve $\Gamma$ into $\A^3$.

\begin{thm}\label{thm:GroupXC3}
For every rational smooth affine  curve $\Gamma$, there exist a linear action of~$\Aut(\Gamma)$ on $\A^3$ and a closed embedding $\tau\colon \Gamma\hookrightarrow \A^3$ which is $\Aut(\Gamma)$-equivariant for this action.
\end{thm}

\begin{proof}
If $\Gamma=\A^1$, it suffices to consider the embedding $\tau\colon \A^1\to\A^3$ defined by $\tau(t)=(t,0,0)$, and to let $\Aut(\Gamma)=\left\{x\mapsto ax+b\mid a\in \C^{*}, b\in \C\right\}$ act on $\A^3$ via the maps $(x,y,z)\mapsto (ax+b(y+1),y,z)$.

If $\Gamma=\C^{*}$, we consider the embedding $\tau\colon \Gamma\to\A^3$ defined by $\tau(t)=(t,1/t,0)$. Its image is the curve in $\A^3$ defined by the equations $z=0$ and $xy=1$. Recall that $$\Aut(\Gamma)=\left\{ \varphi_{\lambda}\colon x \mapsto \lambda x \mid \lambda \in \C ^* \right\} \cup \left\{\psi_{\lambda}\colon x \mapsto \lambda x^{-1} \mid \lambda \in \C ^* \right\}.$$ The embedding $\tau$ becomes $\Aut(\Gamma)$-equivariant, when we let $\Aut(\Gamma)$ act on $\A^3$ via the maps $\Phi_{\lambda}\colon(x,y,z)\mapsto (\lambda x, \lambda^{-1} y,z)$ and $\Psi_{\lambda}\colon(x,y,z)\mapsto (\lambda y,\lambda^{-1} x,z)$.

If $\Gamma$ is equal to $\p^1\setminus \Lambda$, where $\Lambda$ is  a finite set  of at least $3$ points, then its automorphism group $H=\Aut(\Gamma)$ is the finite subgroup of~$\PGL(2,\C)=\Aut(\p^1)$ that preserves the set $\Lambda$.
Applying Corollary~\ref{Coro:ExistHequi}, let  $\delta\colon \p^1\to \p^1$ be a  $H$-equivariant morphism such that $\Lambda=\left\{q\in \p^1\mid\delta(q)=q\right\}$. This allows us to define a closed embedding
$\hat{\tau}\colon \Gamma\to (\p^1\times \p^1)\setminus \Delta$  by letting $\hat{\tau}(q)=(q, \delta(q))$ for all $q\in \Gamma=\p^1\setminus \Lambda$. The morphism~$\hat{\tau}$ is moreover $H$-equivariant, when $H$ acts diagonally on $(\p^1\times \p^1)\setminus \Delta$.

Composing $\hat{\tau}$ with the $\PGL(2,\C)$-equivariant closed embedding $\iota\colon (\p^1\times \p^1)\setminus \Delta\hookrightarrow \A^3$ that we defined in Lemma~\ref{Lem:ClosedEmbP1P1dC3}, we obtain a closed embedding $\tau\colon \Gamma\to \A^3$  which is $H$-equivariant, as desired.
\end{proof}

\section{Explicit formulas for the equivariant embeddings into $\A^3$}\label{Sec:Explicit}
The proof of Theorem~\ref{thm:GroupXC3} is constructive and already contains explicit $\Aut(\Gamma)$-equivariant embeddings into $\A^3$ for the curves $\Gamma=\A^1$ and $\Gamma=\A^1\setminus\{0\}$.  Let us now describe the construction for the other cases, i.e., when the automorphism group $\Aut(\Gamma)$ is finite.

We consider the curves $\Gamma=\p^1\setminus\Lambda$, where $\Lambda$ is a set of at least 3 points of~$\p^1$. Let us denote by $H$ the subgroup of~$\Aut(\p^1)=\PGL(2,\C)$ that restricts to $\Aut(\Gamma)$, and  denote as before by $G$ its pull-back in $\SL(2,\C)$, which is a finite group of order $2\lvert H\rvert$.  The set $\Lambda$ decomposes into $r$ orbits $\Lambda=\bigcup_{i=1}^r \Lambda_i$ of~$H$. An orbit $\Lambda_i$ of~$H$ is given by the zero set of a homogeneous polynomial $p_i\in \C[x,y]$.  Some power $P_i=p_i^{d_i}$ of~$p_i$ is invariant by the action of~$G$ on $\p^1$ defined in Section~\ref{SecSL2}.  For each $i$, Lemma~\ref{Lemm:EpFixed} yields the existence of
a $G$-invariant pair $(f_{i,1},f_{i,2})\in \End(\A^2)$ which satisfy $f_{i,1}y-f_{i,2}x=P_i$. The $H$-equivariant morphism $\delta\colon \p^1 \to \p^1$ given by Proposition~\ref{Prop:TechGH} (or Corollary~\ref{Coro:ExistHequi}) is thus $\delta\colon[x:y]\dasharrow [f_1(x,y):f_2(x,y)]$, where
$$f_1=\frac{1}{r}\left(\prod_{i=1}^r P_i\right)\sum_{i=1}^r \frac{f_{i,1}}{P_i}\quad\text{and}\quad
f_2=\frac{1}{r}\left(\prod_{i=1}^r P_i\right)\sum_{i=1}^r \frac{f_{i,2}}{P_i}.$$
Moreover, $(f_1,f_2)$ is invariant by $G$ and satisfies $f_1y-f_2x=\prod_{i=1}^r P_i$.

Following the proof of Theorem~\ref{thm:GroupXC3}, we define a closed  embedding $\Gamma = \p^1 \setminus \Lambda \to (\p^1\times \p^1) \setminus \Delta$ by $[x:y]\mapsto ([x:y], [f_1:f_2])$. We compose then this latter with the embedding $\iota\colon  (\p^1\times \p^1) \setminus \Delta\to \A^3$ defined by Lemma~\ref{Lem:ClosedEmbP1P1dC3}, and obtain the following $\Aut(\Gamma)=H$-equivariant closed embedding of~$\Gamma$ into $\A^3$.
$$\begin{array}{rcl}
\Gamma = \p^1 \setminus \Lambda &\to& \A^3\\
{ }[x:y]&\mapsto&\displaystyle{ \left(\frac{1}{r}\sum_{i=1}^r \frac{x f_{i,2}+yf_{i,1}}{xf_{i,2}-yf_{i,1}},\frac{1}{r}\sum_{i=1}^r \frac{2x f_{i,1}}{xf_{i,2}-yf_{i,1}},\frac{1}{r}\sum_{i=1}^r \frac{2y f_{i,2}}{xf_{i,2}-yf_{i,1}}\right)}.\end{array}$$
So it suffices to determine the polynomials $f_{i,1}$ and  $f_{i,2}$, which depend on $H$ and $\Lambda$, to get explicit embeddings.

\medskip

Recall that any finite subgroup of~$\Aut(\p^1)=\PGL(2,\C)$ is isomorphic to
$\mathbb{Z}/n\mathbb{Z}$ (the cyclic group of order $n$), $D_{2n}$ (the dihedral group of order $2n$),
${\mathfrak A}_4$ (the tetrahedral group), ${\mathfrak S}_4$
(the octahedral or cubic group)
or ${\mathfrak A}_5$ (the icosahedral or dodecahedral group)
and that there is only one conjugacy class for each of these groups
(see e.g.~\cite{Beauville}).

$1)$ In the cyclic case, we can assume that $H\subset \PGL(2,\C)$ is generated by $[x:y]\mapsto [\xi_n x:y]$, where $\xi_n$ is a primitive $n$-th root of unity. Its pullback $G\subset \SL(2,\C)$ is then generated by $\left( \begin{array}{cc} \zeta & 0\\ 0 & \zeta^{-1}  \end{array} \right)$, where $\zeta$ is a primitive $2n$-th root of unity.
An orbit $\Lambda_i$ of~$H$ is given by the zero set of a polynomial $p_i=a_i x^n+b_i y^n$ for some $(a_i,b_i)\in \C^2 \setminus \{(0,0)\}$ (the cases where $a_i=0$ or $b_i=0$ provide a fixed point with multiplicity $n$). We thus get
$$P_i=(p_i)^2\in \mathcal{O}(\A^2)^G$$
and
$$\left(f_{i,1},f_{i,2}\right)=\left(b_iy^{n-1}(a_i x^n+b_i y^{n}),-a_ix^{n-1}(a_i x^n+b_i y^{n})\right)\in \End(\A^2)^G$$ which satisfy $f_{i,1}y-f_{i,2}x=P_i$ (note that the $f_{i,1}$ and $f_{i,2}$ are here not unique, and could also be chosen without common factor). The corresponding embedding $\Gamma = \p^1 \setminus \Lambda \to\A^3$ is given by
$$[x:y]\mapsto\left(\begin{array}{l}\vspace{0.1cm}
\displaystyle{\dfrac{1}{r}\sum\limits_{i=1}^r \dfrac{a_i x^n-b_i y^n}{a_i x^n+b_i y^n}} \\
\vspace{0.1cm}
\displaystyle{\dfrac{1}{r}\sum\limits_{i=1}^r \dfrac{-2b_i xy^{n-1}}{a_i x^n+b_i y^n}}\\
\vspace{0.1cm}
\displaystyle{\dfrac{1}{r}\sum\limits_{i=1}^r \dfrac{2a_i x^{n-1}y}{a_i x^n+b_i y^n}}
\end{array}\right).$$

$2)$ In the dihedral case, we can assume that $H$ is generated by the maps $[x:y]\mapsto [\xi_n x:y]$ and $[x:y]\mapsto [y:x]$. So $G$ is generated by $\left( \begin{array}{cc} \zeta & 0\\ 0 & \zeta^{-1}  \end{array} \right)$ and $\left( \begin{array}{cc} 0 & \im \\ \im & 0 \end{array} \right)$, where $\im$ denotes the imaginary unit $\sqrt{-1}$.

An orbit $\Lambda_i$ of~$H$ is given by the zero set of~$p_i=a_i (x^{2n}+y^{2n})+2 b_i x^ny^n$ for some $(a_i,b_i)\in \C^2 \setminus \{(0,0)\}$ and we thus get
$$P_i=(p_i)^2\in \mathcal{O}(\A^2)^G$$
and
$$(f_{i,1},f_{i,2})=\left(y^{n-1}( b_ix^n +a_iy^{n})p_i,-x^{n-1}(a_ix^{n}+b_iy^n)p_i\right)\in \End(\A^2)^G$$
which satisfy $f_{i,1}y-f_{i,2}x=P_i$ (note that $P_i=p_i$ is also possible if $n$ is even, and that as before the polynomials $f_{i,1},f_{i,2}$ are  not unique, and could also be chosen without common factor).
This leads to the embedding $\Gamma = \p^1 \setminus \Lambda \to\A^3$ defined by
\[ [x:y]\mapsto \left(\begin{array}{l}\vspace{0.1cm}
\displaystyle{\frac{1}{r}\sum_{i=1}^r \frac{a_i(x^{2n}-y^{2n})}{a_i(x^{2n}+y^{2n})+2b_ix^ny^n}} \\
\vspace{0.1cm}
\displaystyle{\frac{1}{r}\sum_{i=1}^r \frac{-2xy^{n-1}(b_i x^n +a_i y^n)}{a_i(x^{2n}+y^{2n})+2 b_ix^ny^n}}\\
\vspace{0.1cm}
\displaystyle{\frac{1}{r}\sum_{i=1}^r \frac{2x^{n-1}y(a_i x^n+b_i y^n)}{a_i(x^{2n}+y^{2n})+2 b_ix^ny^n}}
\end{array}\right).\]

$3)$ In the case of the tetrahedral group, we can assume that $H\cong {\mathfrak A}_4$ is generated by the maps $[x:y]\mapsto [\im (x+y):x-y]$ and $[x:y]\mapsto [x:-y]$. This implies that $G$ is generated by
$\frac{1}{2}\left( \begin{array}{rr} \im-1 & \im-1 \\ \im+1 & -\im-1  \end{array} \right)$ and $\left( \begin{array}{rr}-\im  & 0 \\ 0 & \im \end{array} \right)$.
An orbit $\Lambda_i$ of~$H$ is given by the zero set of  $$p_i=6a_i (x^5y-xy^5)^2+b_i (x^4+y^4)(x^8+y^8-34x^4y^4),$$ for some $(a_i,b_i)\in \C^2 \setminus \{(0,0)\}$.
We thus get
$$\begin{array}{rcl}
P_i&=&p_i\in \mathcal{O}(\A^2)^G\\
f_{i,1}&=&a_i(x^{10}y-6x^6y^5+5x^2y^9)+b_i(-11x^8y^3-22x^4y^7 +y^{11})\\
f_{i,2}&=&-a_i(5x^9y^2-6 x^5y^6+xy^{10})-b_i(x^{11}-22x^7y^4-11 x^3 y^8)\end{array}$$ which satisfy $(f_{i,1},f_{i,2})\in \End(\A^2)^G$ and
$f_{i,1}y-f_{i,2}x=P_i$ as before.
This gives the embedding $\Gamma = \p^1 \setminus \Lambda \to\A^3$ defined by
$$[x:y]\mapsto \left(\begin{array}{l}\vspace{0.1cm}
\displaystyle{\frac{1}{r} \sum\limits_{i=1}^r \frac{4 a_i x^2 y^2 (x^4+y^4) (x^4-y^4)+b_i(x^{12}- 11 x^8 y^4 + 11 x^4 y^8 -y^{12})}
{6 a_i (x^5y-xy^5)^2+b_i (x^4+y^4)(x^8+y^8-34x^4y^4)}} \\
\vspace{0.1cm}
\displaystyle{\frac{1}{r}\sum\limits_{i=1}^r \frac{-2x
(a_i(x^{10}y-6x^6y^5+5x^2y^9)+b_i(-11x^8y^3-22x^4y^7 +y^{11}))
}{6 a_i (x^5y-xy^5)^2+b_i (x^4+y^4)(x^8+y^8-34x^4y^4)}}\\
\vspace{0.1cm}
\displaystyle{\frac{1}{r}\sum\limits_{i=1}^r
\frac{2y (a_i(5x^9y^2-6 x^5y^6+xy^{10})+b_i(x^{11}-22x^7y^4-11 x^3 y^8))}
{6a_i (x^5y-xy^5)^2+b_i (x^4+y^4)(x^8+y^8-34x^4y^4)}}
\end{array}\right).$$
It is also possible to describe similarly the other cases (${\mathfrak S}_4$ and ${\mathfrak A}_5$), but the formulas are even more intricate.

\bibliography{biblio}
\bibliographystyle{mrl}

\end{document}